\documentclass{amsart}
\usepackage{amsmath,amsfonts,amssymb}
\usepackage{graphicx,psfrag,subfigure}
\usepackage{tikz-cd}
\usepackage[utf8]{inputenc}
\usepackage[T1]{fontenc}
\usepackage{lmodern}
\newtheorem{thm}{Theorem}
\newtheorem{rk}{Remark}
\newtheorem{prop}{Proposition}
\newtheorem{clly}{Corollary}
\newtheorem{lemma}{Lemma}
\newtheorem{defi}{Definition}

\newcommand{\N}{{\mathbb{N}}}

\begin{document}
\title{Non-locally connected spaces I:\\ Coverings and branched coverings on metric spaces.}
\author{Jorge Iglesias, Aldo Portela, Alvaro Rovella and Juliana Xavier}
 
 \maketitle

\begin{abstract}
Whether a rational function can have an indecomposable continuum as its Julia set is a fundamental open problem in complex dynamics. We settle this negatively for one of the most
classical examples of an indecomposable continuum: we prove that the Knaster continuum cannot be the Julia set of any rational function. To obtain this result, we develop a theory
of coverings and branched coverings on general (non-locally connected) metric spaces.  We introduce a new Euler characteristic and extend the Riemann--Hurwitz formula to a wider class of spaces, including continua
for which the classical shape-theoretic invariants are trivial.
\end{abstract}

\section{Introduction}

\subsection*{Motivation and main result}

Classically, a covering map $f:X\to Y$ is a continuous map such that every $y\in Y$ has an open neighborhood $V$ whose preimage is a disjoint union of open sets $U_i$, each
of which is mapped homeomorphically onto $V$. As explained in \cite{lub}, when the spaces are locally connected, $V$ can be assumed to be connected, so the sets $U_i$ are
precisely the connected components of $f^{-1}(V)$ and are therefore intrinsically determined. This is no longer true in the non-locally connected setting: there may be several
different partitions of $f^{-1}(V)$ satisfying the required property. To overcome this difficulty, we use the metric structure of the spaces directly, through the notion of
{\em chains} --- finite sequences of points, each sufficiently close to the following one --- as a substitute for curves.\\

Covering spaces of non-locally connected continua have been studied from several perspectives; see \cite{nadler} for background on indecomposable continua. Fox developed the
theory of {\em overlays}, extending classical covering space theory to arbitrary metrizable spaces and replacing the fundamental group by its shape-theoretic counterpart, the
fundamental pro-group \cite{fox}. Using Fox's theory of overlays, Boroński and Sturm proved that a $P$-adic pseudosolenoid $S$ admits a $d$-fold covering onto itself if and 
only if $d$ is relatively prime to all but finitely many primes in $P$, and that any such covering space is unique up to homeomorphism \cite{boronski-sturm}. Coverings of
individual continua by other continua have also been studied outside this framework: Dębski, Heath and Mioduszewski characterized exactly 2-to-1 maps from continua onto
tree-like continua such as the Knaster continuum \cite{dhm} -- though, it will follow from our work that such maps cannot be coverings of the Knaster continuum in our sense.
Our approach is complementary: rather than the shape-theoretic viewpoint of overlays, we work directly with the metric structure of the continuum through chains, and our methods 
extend naturally to branched coverings of general metric continua, which is essential for the dynamical application below.\\

Our motivation comes from the dynamics of branched coverings on the sphere, in particular of rational maps. A fundamental open problem in complex dynamics — posed in \cite{cmr} — 
asks whether a rational function can have an indecomposable continuum as its Julia set. Since Julia sets are completely invariant, such an example would give an 
indecomposable continuum carrying a branched self-covering; understanding the intrinsic dynamics of branched coverings on indecomposable continua can therefore constrain which 
continua may occur as invariant sets in ambient dynamics.\\

In this paper we settle this question negatively for one of the most classical examples of an indecomposable continuum. The Knaster continuum --- the classical "buckethandle" continuum, one of the standard first examples of an indecomposable continuum in continuum theory --- cannot be a completely invariant continuum for any branched covering of the sphere of degree greater than one. Consequently, the Knaster continuum cannot be the Julia set of a rational function. Precisely, we prove that every branched self-covering of the Knaster continuum satisfies a Riemann--Hurwitz relation incompatible with the classical Riemann--Hurwitz formula for branched coverings of the sphere. The obstruction is therefore not specific to holomorphic dynamics: the Knaster continuum is ruled out already at the topological level, for arbitrary branched coverings of the sphere.\\

This result fits into a broader question. Until recently it was not known whether there existed branched coverings of the sphere of degree $d$, $|d|>1$, with an indecomposable
completely invariant continuum. An example was constructed in \cite{plig}, but the resulting map is not Thurston equivalent to a rational map: it is a $(2,2,2,2)$ orbifold with
Anosov homology action, which is a Thurston obstruction.  Consequently, even in the $C^0$ setting, it remains unknown whether there is a Thurston map, Thurston equivalent to a rational map, with an indecomposable completely
invariant continuum — and likewise whether there is a plane branched covering of degree $d$, $|d|>1$, with an indecomposable completely invariant continuum at all. Our result
shows that the Knaster continuum, at least, cannot provide such an example.

\subsection*{New tools: chains and chain-simple-connectedness}

The purpose of this paper is to develop intrinsic tools for these questions, adapted to continua that need not be locally connected or path-connected. Chains characterize
connectedness and, under suitable hypotheses, can be uniquely lifted by coverings; this yields a covering theory for non-locally connected continua, together with an
alternative homotopy theory built on lifting continua rather than curves.\\

Within this framework we introduce a notion of simple connectedness that does not assume path-connectedness, which we call {\em chain-simply-connectedness}. We prove that
every connected covering of a chain-simply-connected continuum is a homeomorphism. The notion is genuinely different from classical simple connectedness: the quasi-circle $Y$
of Figure \ref{quasicircle}, although simply connected in the classical sense, is not chain-simply-connected — a nontrivial connected covering of $Y$ is given by the quasi-line
of Figure \ref{quasiline}. On the other hand, non-separating plane continua {\em are} chain-simply-connected.\\

\begin{figure}[h]
\centering
{\includegraphics[scale=0.4]{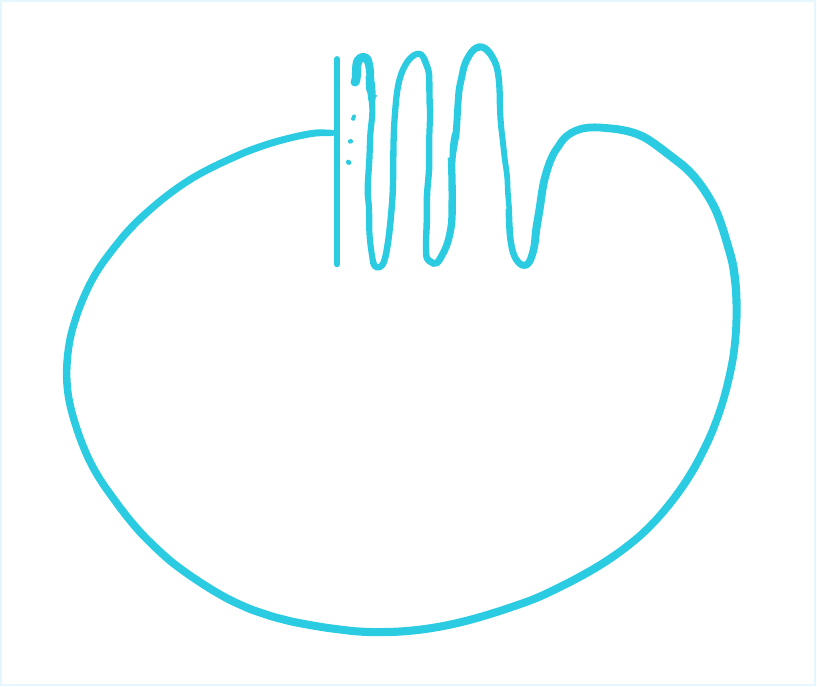}}
\caption{The quasi-circle.}
\label{quasicircle}
\end{figure}

\begin{figure}[h]
\centering
{\includegraphics[scale=0.7]{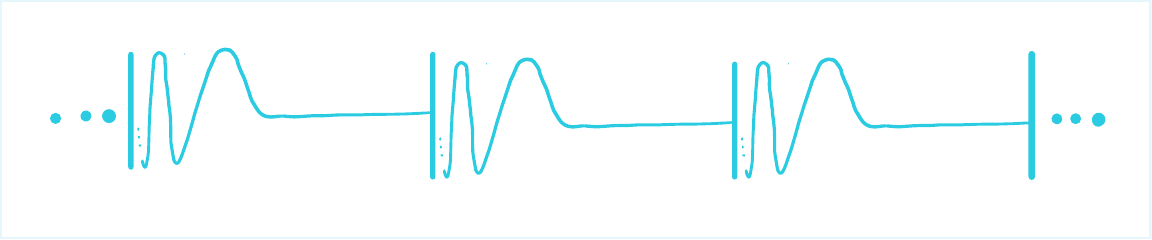}}
\caption{The quasi-line.}
\label{quasiline}
\end{figure}

It is known that the dyadic solenoid admits $d:1$ self-coverings for every odd $d$ and is therefore not chain-simply-connected.  This also provides examples of indecomposable 
continua admitting self-coverings; see \cite{fox,boronski-sturm} for the shape-theoretic and overlay perspective on coverings of solenoids and related continua. Such self-coverings 
are not easy to come by in general: for a compact connected space to admit a nontrivial covering of itself is a strong rigidity condition, ruling out most continua and imposing 
severe constraints even among those that do admit one -- as illustrated by the arithmetic condition on $d$ in \cite{boronski-sturm} above, by the general obstructions and 
constructions surveyed in \cite{timm-survey}, and by the obstruction we obtain for the Knaster continuum in this paper.\\

Beyond the covering case, we also allow critical points, i.e.\ we study branched coverings of general metric spaces and their lifting properties. This is essential for the dynamical
application, since completely invariant continua for rational maps --- and, more generally, for branched coverings of the sphere --- naturally interact with critical points. A 
completely invariant continuum for a branched covering of the sphere is shaped by two things at once: its own branched-covering structure, and the dynamics of the ambient 
sphere it sits in. Our obstruction for the Knaster continuum comes precisely from comparing the two: we show that a branched self-covering of the Knaster continuum satisfies 
a Riemann--Hurwitz relation that cannot occur for a branched covering of the sphere.\\

A Riemann--Hurwitz relation of a pair of spaces $(X,Y)$ is a formula relating the number of critical points to the degree of {\em any} branched covering $f:X\to Y$; the last
section of the paper gives the precise definition and explains in detail how the relation obtained for the Knaster continuum contradicts the classical Riemann--Hurwitz formula 
for sphere coverings. Along the way, we introduce a genuinely new Euler characteristic for continua such as the Knaster continuum, giving this notorious continuum a 
well-defined topological invariant it was not previously known to have: the shape-theoretic Euler characteristic is $1$ for the Knaster continuum (that of a point), 
while ours is $1/2$.

\subsection*{Organization of the paper}
Section \ref{2} introduces metric coverings, chains and the associated homotopy theory, with examples in Section \ref{4}. Chain-simply-connectedness is developed in
Section \ref{3}, where we prove:

\begin{thm}\label{t1}
Let $X$ be compact, $f:X\to Y$ a covering of degree $d$, and $K\subset Y$ a chain-simply-connected continuum. Then $f^{-1}(K)$ has $d$ connected components, each homeomorphic
to $K$ via $f$.
\end{thm}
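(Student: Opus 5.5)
The plan is to reduce the statement to the result announced above, that every connected covering of a chain-simply-connected continuum is a homeomorphism. I would apply it to each component of $f^{-1}(K)$ with the restricted map, and then count components using the degree. The argument has four steps.

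\textbf{Step 1: the restriction is a covering.} Put $\tilde K=f^{-1}(K)$ and $g=f|_{\tilde K}:\tilde K\to K$. For $y\in K$, take an evenly covered neighborhood $V$ in $Y$ with $f^{-1}(V)=U_1\sqcup\dots\sqcup U_d$ and each $f|_{U_i}:U_i\to V$ a homeomorphism. Then $V\cap K$ is evenly covered by $g$, with sheets $U_i\cap\tilde K$. So $g$ is a $d$-fold covering in the metric sense of Section \ref{2}. Moreover $\tilde K$ is compact, being closed in the compact space $X$.

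\textbf{Step 2: each component is open and closed in $\tilde K$.} Let $C$ be a component of $\tilde K$. I would show $C$ is open in $\tilde K$ using chains. Fix $\epsilon$ smaller than a Lebesgue number for the evenly covered cover of $K$. By compactness there is $\delta>0$ such that $\delta$-chains in $\tilde K$ project to $\epsilon$-chains in $K$ and are lifted uniquely; this is the chain-lifting property from Section \ref{2}. The $\delta$-chain classes (the ``$\delta$-components'') of $\tilde K$ are then open and closed. Since $C$ is the intersection of the $\delta$-components containing it, it is closed.

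For openness I would argue as follows. Since $K$ is connected, any $\epsilon$-chain in $K$ lifts starting at any point of a fiber. Hence every $\delta$-component of $\tilde K$ surjects onto $K$ and contains at least one point of each fiber. So there are at most $d$ of them. As $\delta\to 0$ these classes decrease, and since there are at most $d$ of them, they stabilize. Their intersection over all $\delta$ is a quasi-component, which equals the component by compactness. Therefore $\tilde K$ has finitely many components, each clopen, and each one is a $\delta$-component for small $\delta$.

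\textbf{Step 3: each component maps homeomorphically.} Each component $C$ is clopen, so $g|_C:C\to K$ is again a covering, of some degree $d_C\ge 1$. The degree is locally constant on $K$, hence constant because $K$ is connected. Surjectivity follows from the chain lifting in Step 2. Since $C$ is a connected covering of the chain-simply-connected continuum $K$, the theorem of Section \ref{3} gives that $g|_C$ is a homeomorphism, so $d_C=1$.

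\textbf{Step 4: counting.} The fibers of $g$ have $d$ points, and each component contributes exactly one point to each fiber. Hence there are exactly $d$ components, as claimed.

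\textbf{Main obstacle.} I expect the hardest part to be Step 2, showing that components are clopen with only finitely many of them. This needs the uniform $\delta$ from compactness of $X$, together with the fact that connectedness in a compact metric space is equivalent to $\delta$-chain connectedness for all $\delta$. I would prove the stabilization of $\delta$-components by monotonicity: as $\delta$ decreases, the partition into $\delta$-components refines, and the number of classes is bounded by $d$.
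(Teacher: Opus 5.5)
\textbf{Gap: Step 3 is circular.} You cite the result that a connected covering of a chain-simply-connected continuum is a homeomorphism, but in the paper this is not an independent theorem. The introduction states it as a corollary of Theorem~\ref{t1}. In Section~\ref{3} it is Corollary~\ref{self}, which is deduced from the unlabelled corollary following Lemma~\ref{kalfa}, and that corollary is Theorem~\ref{t1} without compactness of $X$.

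Your Steps 1, 2 and 4 are the easy part. They amount to Lemma~\ref{kalfa}: each chain component of $f^{-1}(K)$ surjects onto $K$, so there are at most $d$ of them, followed by counting. Chain-simple-connectedness of $K$ enters your argument only through the cited result. So the real content of the theorem, that $f$ is injective on each component, is assumed rather than proved. What you have shown is only that the theorem follows from its own corollary.

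\textbf{Missing idea: how chain-simple-connectedness gives injectivity.} The paper does this in two steps.
\begin{enumerate}
\item Lemma~\ref{alfa}. By Corollary~\ref{cl2}, $\delta$-chains in $K$ lift uniquely to $\varepsilon$-chains. Moreover, $\delta$-homotopic $\delta$-chains with the same endpoints have lifts ending at the same point (Lemma~\ref{endpts}, using $\varepsilon<i(f)/4$). Apply chain-simple-connectedness with $\rho=\delta$. This gives $\alpha>0$ such that every closed $\alpha$-chain in $K$ is $\delta$-homotopic to a constant chain. Hence its $\varepsilon$-lift from any starting point is closed.
\item Lemma~\ref{unifcont}. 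Suppose $x_0\neq x_1$ lie in the same fiber and are chain-connected in $f^{-1}(K)$. Take a chain from $x_0$ to $x_1$ in $f^{-1}(K)$, fine enough that it is an $\varepsilon$-chain. By uniform continuity of $f$ on the compact set $f^{-1}(K)$, we may also choose it so that it projects to a closed $\alpha$-chain in $K$. By uniqueness, it is the $\varepsilon$-lift of that closed chain, yet it is not closed. This is a contradiction.
\end{enumerate}

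Therefore distinct points of a fiber lie in distinct chain components of $f^{-1}(K)$, and these coincide with the components by compactness. Each component then maps bijectively, hence homeomorphically, onto $K$. You need to supply this argument, or an equivalent one, in Step 3 for the proof to be complete.
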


As a corollary, every covering $f:X\to Y$ with $X$ connected and $Y$ a chain-simply-connected continuum must be a homeomorphism.

Branched coverings are introduced in Section \ref{6}, where the main result is:

\begin{thm}\label{t2}
Let $f$ be a branched covering between continua $X$ and $Y$. If $K\subset Y$ is compact and $K'$ is a component of $f^{-1}(K)$, then $f|_{K'}:K'\to K$ is a branched covering.
In particular, the number of connected components of $f^{-1}(K)$ is at most the degree of $f$.
\end{thm}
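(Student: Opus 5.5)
The definition of branched covering in Section \ref{6} has not been stated yet, so I will work with the natural one: $f:X\to Y$ is continuous, surjective, open, finite-to-one, and has a finite critical set $C\subset X$. Off $f^{-1}(f(C))$ the map is a local homeomorphism and a covering of degree $d$ onto $Y\setminus f(C)$. Each point $x$ carries a local degree $\deg_x f$, with $\sum_{x\in f^{-1}(y)}\deg_x f=d$ for every $y$. Around a critical point $c$ there are arbitrarily small neighborhoods $U$ such that $f|_U$ is onto a neighborhood of $f(c)$ and has degree $\deg_c f$ on regular values.

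The plan is to verify these properties one by one for the restriction $g=f|_{K'}:K'\to K$, using the chain machinery of Section \ref{2} as the substitute for path lifting.

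\textbf{Step 1: $g$ is surjective.} Fix $y\in K$ and $\epsilon>0$. Since $K$ is a continuum, $K$ admits $\epsilon$-chains from $f(x_0)$ to $y$ for any $x_0\in K'$. I would perturb such a chain to avoid the finitely many critical values, except possibly at its endpoints. The chain lifting result of Section \ref{2} then lifts it to a $\delta(\epsilon)$-chain in $f^{-1}(K)$ starting at $x_0$; uniform continuity on the compact $X$ makes the lifted chains fine. As $\epsilon\to0$, the lifted chains accumulate, and the limit set is a continuum inside $f^{-1}(K)$ that contains $x_0$ and a preimage of $y$. That continuum lies in $K'$, so $y\in g(K')$.

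\textbf{Step 2: $g$ is open.} I would prove a local version of Step 1 near each $x\in K'$. Take a small neighborhood $U$ of $x$ of the kind the branched covering definition provides, so that $f(U)=V$ and $f^{-1}(V)$ splits into finitely many pieces, one of which is $U$. Let $W$ be any relatively open subset of $K$ with $W\subset V$. Every point of $K\cap V$ close to $f(x)$ is joined to $f(x)$ by fine chains staying in $K\cap V$, and these lift inside $U$ starting at $x$. The resulting limit continua stay in $f^{-1}(K)$ and meet $x$, hence lie in $K'$. This shows that $g(K'\cap U)$ contains a $K$-neighborhood of $f(x)$, which is what openness requires.

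\textbf{Step 3: finiteness, degree and critical points.} Finite-to-oneness and finiteness of the critical set of $g$ are inherited directly from $f$. The degree of $g$ is counted by summing local degrees over $g^{-1}(y)$ for a regular value $y\in K$; this sum is at most $d$. I would show it is independent of $y$ using a connectedness and local-constancy argument on $K$ minus the critical values. If $K$ minus the critical values is disconnected, the argument instead goes through chains jumping across the critical values, with multiplicities counted by local degree.

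\textbf{Step 4: the bound on components.} The components $K'_1,\dots,K'_m$ of $f^{-1}(K)$ are pairwise disjoint. Each of them surjects onto $K$ by Step 1. So for a regular value $y\in K$, each fiber $g_i^{-1}(y)$ is nonempty, and these fibers are disjoint subsets of $f^{-1}(y)$, which has $d$ points. Hence $m\le d$.

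The main obstacle is Step 2 together with the limiting argument in Step 1 near critical points, in the case where $K$ is not locally connected. Lifted chains can pass through small neighborhoods of critical points, where uniqueness of lifting fails. I would handle this by choosing the chain mesh smaller than the Lebesgue number of a cover of $X$ by evenly covered sets and branched neighborhoods, and by allowing any choice of lift through a critical neighborhood. The conclusion only needs that some lift stays in $f^{-1}(K)$ and accumulates on a continuum through $x_0$; uniqueness of the lift is not needed.
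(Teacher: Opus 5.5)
Your Steps 1 and 4 are correct, and they are essentially the paper's argument written with chains. The paper's claim that every $f^{-1}(K)$-conglomerate of parts of admissible neighborhoods maps onto $K$ is exactly the existence of lifts of fine chains, and Lemma \ref{conexos} plays the role of your limit continua. Two small corrections:
\begin{itemize}
\item Under the paper's Definition \ref{bc}, which is the definition to check against, lifts exist simply because each part maps onto its admissible neighborhood and has diameter $<\varepsilon$. So you need neither to perturb chains off critical values nor Lemma \ref{lifts}, which requires $i(f)>0$.
\item The lifts are fine because the parts are small, not because $f$ is uniformly continuous.
\end{itemize}

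The genuine gap is Step 2, and Step 3 rests on it. You claim that every point of $K\cap V$ near $f(x)$ is joined to $f(x)$ by arbitrarily fine chains inside $K\cap V$. That is a local connectedness property, and it fails for exactly the continua this paper targets. Near a typical point the Knaster continuum is a Cantor set times an arc, and points on different strands lie in different quasi-components of $K\cap\overline{V}$. So your argument only shows that $g(K'\cap U)$ contains the points locally chain-connected to $f(x)$, not a neighborhood of $f(x)$ in $K$. Step 3 has the same problem: local constancy of the fiber count of $g$ requires knowing which nearby points of $f^{-1}(K)$ belong to $K'$, and that is what Step 2 failed to control.

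The missing idea is to use the global information first. Steps 1 and 4 do not depend on Step 2, and together they show that $f^{-1}(K)$ has finitely many components. Hence each component is open as well as closed in $f^{-1}(K)$. Being disjoint compacta, they are pairwise at distance at least some $\eta>0$.

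Now take $\varepsilon<\eta$ and an $\varepsilon$-admissible neighborhood $U$ of $y\in K$ with parts $U_i\ni x_i$. Since $x_i\in f^{-1}(K)$ and $\mathrm{diam}(U_i)<\eta$, the set $U_i\cap f^{-1}(K)$ lies in the component of $x_i$. Therefore $g^{-1}(U\cap K)=\bigsqcup_{x_i\in K'}\bigl(U_i\cap f^{-1}(K)\bigr)$.

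Restrict each punctured degree-$r_i$ covering $U_i\setminus\{x_i\}\to U\setminus\{y\}$ to the preimage of $(K\cap U)\setminus\{y\}$. This shows that $U\cap K$ is $\varepsilon$-admissible for $g$, with $\deg(g,x_i)=\deg(f,x_i)$, and openness of $g$ follows.

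Since $x_i$ is the only possible critical point in $U_i$, we get $\sum_{x\in g^{-1}(y')}\deg(g,x)=\sum_{x_i\in K'}r_i$ for every $y'\in U\cap K$. So this count is locally constant on all of $K$, including at critical values, and hence constant on the connected set $K$. No chains jumping across critical values are needed. This is the reasoning behind the paper's remark that the branched covering property of $f|_{K'}$ is \emph{now immediate}.
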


The exact number of components of $f^{-1}(K)$ depends on additional properties of the ambient spaces: if $K$ contains no critical values and is chain-simply-connected,
this number equals the degree of $f$ (Theorem \ref{t1}); for locally connected spaces it is related instead to the Euler characteristic.\\

Section \ref{7} also contains the paper's main application: we introduce a new Euler characteristic for spaces such as the Knaster continuum -- finer than the classical shape-theoretic one, which is trivial there -- derive the Riemann--Hurwitz relation satisfied by its branched self-coverings, and show that this relation is incompatible with the classical Riemann--Hurwitz formula for the sphere -- ruling out the Knaster continuum as the Julia set of a rational function.

\section{Metric coverings, chains and homotopy.}\label{2}

As explained in the introduction, we endow the space with a metric structure in order to avoid the ambiguities that appear in the standard definition of covering spaces when
the underlying space is not locally connected.\\

\begin{defi} 
Let $X$ be a metric space. We say that a continuous map $f:X \to Y$ is a {\em metric covering}, if for every point $y\in Y$ and any positive 
number $\epsilon$ 
there exists a neighborhood $U$ of $y$ such that:\\
\begin{enumerate}
\item
$f^{-1}(U)$ is a disjoint union of open sets each one of diameter less than $\epsilon$, and
\item
the restriction of $f$ to each of these open sets is a homeomorphism onto $U$. \\
\end{enumerate} 

The neighborhood $U$ of $y$ satisfying these conditions is called $\epsilon$-admissible.\\
\end{defi}

Note that a metric covering is a covering in the usual sense.\\

\begin{defi} 
Let $f:X\to Y$ be a covering map.\\

\begin{itemize}
\item
$f$ is {\emph {finite}} if the set $f^{-1}(y)$ is finite for all $y\in Y$.\\
\item
If $f$ is finite and $\# f^{-1}(y)=d$ for all $y\in Y$, we say that $d$ is the {\emph{degree}} of $f$ and we use 
the notation $d=\deg(f)$.\\
\item
$f$ is {\emph{ discrete}} if $X$ is a metric space and 
$$i(f):=\inf \{d(x, y): x\neq y, f(x)=f(y)\}>0.$$\\
\end{itemize}
\end{defi}
 
The following facts have elementary proofs:\\

\begin{lemma}
\label{finite} 
Let $f:X \to Y$ be a covering map between metric spaces.\\

\begin{enumerate}
\item
If $f$ is finite, and $X$ is metric, then $f$ is a metric covering.\\
\item
If $f$ is finite and $Y$ is connected, then $f$ has a well defined degree.\\
\item
If $X$ is compact, then $f$ is finite.\\
\end{enumerate}
\end{lemma}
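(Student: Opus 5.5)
We prove the three items in the order (3), (2), (1), since (3) feeds (1) when $X$ is compact. Throughout, a covering map is understood in the classical sense: every $y\in Y$ has an open neighborhood $V$ with $f^{-1}(V)=\bigsqcup_i U_i$, the $U_i$ disjoint open sets and each $f|_{U_i}:U_i\to V$ a homeomorphism.

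For (3), fix $y\in Y$ and an evenly covered neighborhood $V$ of $y$. Each sheet $U_i$ contains exactly one point of $f^{-1}(y)$. Hence $f^{-1}(y)$ is a discrete subspace of $X$: the sheet through a preimage point is an open set meeting the fiber only in that point. The fiber is also closed, since it is the preimage of a point in a metric (hence $T_1$) space. So it is a closed discrete subset of the compact space $X$, and therefore finite.

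For (2), the function $y\mapsto \#f^{-1}(y)$ is locally constant: on an evenly covered $V$ every point of $V$ has exactly one preimage in each sheet, so the count equals the number of sheets, which is finite because $f$ is finite. A locally constant function on a connected space is constant. Its value is the degree.

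For (1), fix $y$ and $\epsilon>0$, and take an evenly covered $V$. Since $f$ is finite, only finitely many sheets $U_1,\dots,U_d$ meet $f^{-1}(y)$. Any other sheet would contain a preimage of $y$ because it maps onto $V$, so there are exactly $d$ sheets. Write $x_i=f^{-1}(y)\cap U_i$. Choose open sets $W_i\subset U_i$ with $x_i\in W_i$ and $\operatorname{diam} W_i<\epsilon$, for instance $W_i=U_i\cap B(x_i,\epsilon/3)$. Each $f|_{U_i}$ is a homeomorphism onto the open set $V$, so each $f(W_i)$ is an open neighborhood of $y$. Set $U=\bigcap_{i=1}^d f(W_i)$, which is an open neighborhood of $y$ because the intersection is finite. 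Then
$$f^{-1}(U)=\bigsqcup_{i=1}^d \left(U_i\cap f^{-1}(U)\right),$$
and $U_i\cap f^{-1}(U)=(f|_{U_i})^{-1}(U)\subset W_i$. These pieces are pairwise disjoint, open, of diameter less than $\epsilon$, and each is mapped homeomorphically onto $U$ as a restriction of $f|_{U_i}$. So $U$ is $\epsilon$-admissible.

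The only point requiring care is in (1): finiteness is exactly what makes the intersection $\bigcap_i f(W_i)$ open. With infinitely many sheets this intersection of neighborhoods could fail to be a neighborhood of $y$.
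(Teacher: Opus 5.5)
Your proof is correct and complete: the paper gives no proof of this lemma (it only remarks that these facts have elementary proofs), and your treatment of (3), (2), (1) is the standard verification it leaves to the reader. One cosmetic point: in (1), if covering maps are not assumed surjective and $f^{-1}(y)=\emptyset$, then $d=0$ and the empty intersection should be read as $U=V$. Equivalently, set $U=V\cap\bigcap_{i=1}^d f(W_i)$, so that $U\subset V$ and your decomposition of $f^{-1}(U)$ remains valid.
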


The following assertion will be key in the rest of the article:\\

\begin{lemma}\label{leb} Let $f:X \to Y$ be a discrete metric covering with $Y$ a metric space, and $K\subset Y$ be compact. 
Then, for all $0<\epsilon \leq i(f)/2$, there exists $\delta >0$ such that if 
$d(x,y)<\delta$, $x,y \in K$ and $\tilde x\in f^{-1}(x)$, then there exists a unique $\tilde y\in f^{-1}(y)$ such that 
$d(\tilde x, \tilde y)<\epsilon$.
\end{lemma}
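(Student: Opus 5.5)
Uniqueness comes directly from discreteness, so the work is existence. Fix $0<\epsilon\le i(f)/2$. If $\tilde y_1\neq\tilde y_2$ both lie in $f^{-1}(y)$ with $d(\tilde x,\tilde y_j)<\epsilon$, then
\[
d(\tilde y_1,\tilde y_2)<2\epsilon\le i(f).
\]
But $f(\tilde y_1)=f(\tilde y_2)$, so $d(\tilde y_1,\tilde y_2)\ge i(f)$, a contradiction. This holds for any $\delta$.

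For existence I will use compactness of $K$ together with a Lebesgue number argument.

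\textbf{Step 1 (local covers).} For each $z\in K$, take an $\epsilon/2$-admissible neighborhood $U_z$ of $z$ given by the definition of metric covering. Then $f^{-1}(U_z)=\bigsqcup_i U_z^i$, where each $U_z^i$ has diameter $<\epsilon/2$ and $f|_{U_z^i}:U_z^i\to U_z$ is a homeomorphism.

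\textbf{Step 2 (Lebesgue number).} The sets $U_z\cap K$ form an open cover of the compact metric space $K$. Let $\delta>0$ be a Lebesgue number for this cover. Then any $x,y\in K$ with $d(x,y)<\delta$ lie together in some $U_z$, since $\{x,y\}$ has diameter $<\delta$.

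\textbf{Step 3 (lifting).} Let $x,y\in K$ with $d(x,y)<\delta$, pick $z$ with $x,y\in U_z$, and let $\tilde x\in f^{-1}(x)$. Since $\tilde x\in f^{-1}(U_z)$, it lies in exactly one sheet $U_z^i$. Because $f|_{U_z^i}$ is onto $U_z\ni y$, there is $\tilde y\in U_z^i$ with $f(\tilde y)=y$. Then
\[
d(\tilde x,\tilde y)\le \operatorname{diam}U_z^i<\epsilon/2<\epsilon,
\]
which gives existence. Uniqueness then follows from the first paragraph.

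The only delicate point is that $\delta$ must be uniform in $x$, $y$ and the choice of $\tilde x$. This is why I use a Lebesgue number for the cover of the compact set $K$ rather than pointwise neighborhoods. The number of preimages never enters, because every sheet over $U_z$ has small diameter by the metric covering definition. The hypothesis $\epsilon\le i(f)/2$ is used only for uniqueness.
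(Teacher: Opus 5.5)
Your proof is correct and follows the paper's argument: a Lebesgue number for the cover of $K$ by admissible neighborhoods gives existence by lifting $y$ inside the sheet containing $\tilde x$, and uniqueness follows from the triangle inequality together with $2\epsilon\le i(f)$. Using $\epsilon/2$-admissible neighborhoods instead of $\epsilon$-admissible ones is harmless but not needed.
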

\begin{proof} Let $0<\epsilon \leq i(f)/2$. For all $y\in Y$ there exists $U(y)$ a neighborhood of $Y$ such that $f^{-1}(U)$ is a 
disjoint union of open sets each one of diameter less than $\epsilon$ and
the restriction of $f$ to each of these sets is a homeomorphism onto $U$.  
Let $\delta>0$ be the Lebesgue number of the covering of $K$ given by the $U(y), y\in K$.  Now, take 
$x,y\in K$ such that $d(x,y)<\delta$ and
$\tilde x\in f^{-1}(x)$.  We know that there exists $\tilde y \in f^{-1}(y)$ such that $d(\tilde y, \tilde x)<\epsilon$.  Now let $f(z)=y$ and
$z\in B(\tilde x, \epsilon)$. Then $d(z,\tilde y)\leq d(z,\tilde x)+d(\tilde x, \tilde y)< i(f)$, which gives $z=\tilde y$.\\
 
\end{proof}

\begin{defi}
\label{cadena} 
An $\varepsilon$- $n$- chain in $K$ is a finite sequence of points $x_0, x_1, \ldots, x_n$, where $x_i \in K$ and $d(x_i, x_{i+1}) < \varepsilon$.
We will denote the chain $x_0, \ldots, x_n$ by $\mathcal{A} = \{x_i\}_{i=0}^{n}$.\\  

Given two $\varepsilon$- $n$- chains in $K$, $\mathcal{A}_1  = \{x_i\}_{i=0}^{n}$ and $\mathcal{A}_2 = \{y_i\}_{i=0}^{n}$, we say that $d(\mathcal{A}_1, \mathcal{A}_2) < \delta$ if 
$d(x_i, y_i) < \delta$ for all $i = 0, \ldots, n$.\\

We say that the $\varepsilon$- $n$- chain $\mathcal{A} = \{x_i\}_{i=0}^{n}$ goes from $x$ to $y$ if $x_0=x$ and $x_n=y$.\\
\end{defi}
 
We say that $\mathcal{B}=\{y_i\}_{i=0}^m$ is a {\em refinement} of
$\mathcal{A}= \{x_i\}_{i=0}^n$ if $m>n$ and there exists $0\leq i_0<i_1<\dots<i_n = m$ such that $y_j=x_0$ for all $j\in \{0,\ldots, i_0\}$, $y_j=x_1$ for all $j\in \{i_0+1,\ldots, i_1\},
\ldots, y_j=x_n$ for all $j\in \{i_{n-1}+1,\ldots, i_n=m\}$.\\

The purpose of the definition of refinement is that one can refine a $\varepsilon$-$n$-chain to a $\varepsilon$-$m$-chain (with $m>n$) 
just repeating some points of the original chain.\\

Given a $\varepsilon$- $n$- chain in $K$, we will sometimes refer to it as {\it an $\varepsilon$- chain}, or simply {\it a chain} with no reference to 
$n$ and $\varepsilon$.\\

\begin{defi}\label{homotopia}
Given two $\varepsilon$-chains in \(K\), \(\mathcal{A}_1\) and \(\mathcal{A}_2\)  from \(x\) to \(y\), we say they are \(\delta\)-homotopic in $K$ 
if there exist
\(\mathcal{C}_1, \ldots, \mathcal{C}_m\)$, \varepsilon$- $n$- chains in $K$  from \(x\) to \(y\) 
such that \(\mathcal{C}_1\) is a refinement of \(\mathcal{A}_1\), 
\(\mathcal{C}_m\) is a refinement of \(\mathcal{A}_2\) , and 
\(d(\mathcal{C}_i, \mathcal{C}_{i+1}) < \delta\) for all $i=1,\ldots,m-1$.\\
\end{defi}

We now turn to lifting properties of discrete metric coverings. In what follows, $X$ and $Y$ will always be metric spaces.\\

\begin{defi} Let $f:X \to Y$ be a covering,
${\mathcal A}=\{y_i\}_{i=0}^n$ a chain
in $Y$ and $\varepsilon>0$. We say that the chain $\tilde {\mathcal A}=\{\tilde y_i\}_{i=0}^n$ $\varepsilon$-lifts ${\mathcal A}$, 
if $\tilde {\mathcal A}$ is a $\varepsilon$-chain in $X$ and $f(\tilde y_i)=y_i$ for all $i=0, \ldots, n$. \\
\end{defi}

\begin{lemma}
\label{lifts} 
Let $f:X \to Y$ be a discrete metric covering and $K\subset Y$ be compact. Then, for all $0<\varepsilon \leq i(f)/2$, there exists $\delta >0$ such that if 
${\mathcal A}=\{y_i\}_{i=0}^n$ is a $\delta$-chain
in $K$ and $\tilde y_0$ is a lift of $y_0$, then there exists a unique $\varepsilon$-lift of $\mathcal A$,  $\tilde {\mathcal A}=\{x_i\}_{i=0}^n$  
such that $x_0=\tilde y_0$. 
\end{lemma}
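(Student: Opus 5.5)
The proof is a direct induction on the length of the chain, using Lemma \ref{leb} at each step. Fix $0<\varepsilon\le i(f)/2$ and let $\delta>0$ be the number supplied by Lemma \ref{leb} for this $\varepsilon$ and the compact set $K$. We claim this $\delta$ works.

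For existence, let $\mathcal A=\{y_i\}_{i=0}^n$ be a $\delta$-chain in $K$ and $\tilde y_0\in f^{-1}(y_0)$. Put $x_0=\tilde y_0$. Suppose $x_0,\dots,x_k$ have been constructed with $f(x_j)=y_j$ and $d(x_j,x_{j+1})<\varepsilon$. Since $y_k,y_{k+1}\in K$, $d(y_k,y_{k+1})<\delta$ and $x_k\in f^{-1}(y_k)$, Lemma \ref{leb} gives a point $x_{k+1}\in f^{-1}(y_{k+1})$ with $d(x_k,x_{k+1})<\varepsilon$. After $n$ steps we obtain an $\varepsilon$-chain $\{x_i\}_{i=0}^n$ in $X$ with $f(x_i)=y_i$ and $x_0=\tilde y_0$, i.e. an $\varepsilon$-lift of $\mathcal A$.

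For uniqueness, let $\{x'_i\}$ be another $\varepsilon$-lift with $x'_0=\tilde y_0$. We show by induction that $x'_i=x_i$ for all $i$. The base case holds by hypothesis. If $x'_k=x_k$, then $x'_{k+1}$ and $x_{k+1}$ both lie in $f^{-1}(y_{k+1})$ and are within $\varepsilon$ of $x_k$. The uniqueness clause of Lemma \ref{leb} then forces $x'_{k+1}=x_{k+1}$. This also follows directly: their distance is less than $2\varepsilon\le i(f)$, and distinct points of the same fiber are at distance at least $i(f)$.

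There is no real obstacle here. The only point requiring care is that $\delta$ depends only on $\varepsilon$ and $K$, not on the chain or its length $n$. That is exactly what Lemma \ref{leb} provides, via the Lebesgue number of the admissible cover of the compact set $K$. Note also that the hypothesis $\varepsilon\le i(f)/2$ is precisely what makes each step of the lift unique.
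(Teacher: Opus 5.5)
Your proof is correct and follows the paper's argument. Both take $\delta$ from Lemma \ref{leb} and construct the lift step by step, each step being the unique lift in $B(x_k,\varepsilon)$. Your explicit induction for uniqueness, and the alternative observation that $2\varepsilon\le i(f)$, spell out what the paper's short proof leaves implicit.
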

\begin{proof} Let $0<\varepsilon \leq i(f)/2$ and take $\delta >0$ as in Lemma \ref{leb}.  Now, if
${\mathcal A}=\{y_i\}_{i=0}^n$ is a $\delta$-chain
in $K$ and $\tilde y_0$ is a lift of $y_0$, then by   Lemma \ref{leb} there exists a unique lift $\tilde y_1$ of $y_1$ in $B(\tilde y_0, \varepsilon)$. 
We let $x_1=\tilde y_1$.  
Analogously, there exists a unique lift 
$\tilde y_2$ of $y_2$ in $B(x_1, \varepsilon)$ and we let $x_2=\tilde y_2$.  We carry on until we finish.\\
\end{proof}

\begin{lemma}
\label{endpts} 
Let $f:X \to Y$ be a discrete metric covering and $K\subset Y$ be compact. Let $\epsilon <i(f)/4$ and $\delta >0$ as in Lemma \ref{leb}.  Let
$\mathcal{A}= \{x_i\}_{i=0}^n$ and
$\mathcal{B}=\{y_i\}_{i=0}^n$ be $\delta$-chains in $K$ from $x$ to $y$ with $d(\mathcal{A},\mathcal{B})<\delta$. Fix $\tilde x\in f^{-1}(x)$ and let 
$\tilde {\mathcal{A}}= \{\tilde x_i\}_{i=0}^n$ and
$\tilde {\mathcal{B}}=\{\tilde y_i\}_{i=0}^n$ be the unique $\epsilon$-lifts of $\mathcal{A}$ and $\mathcal{B}$ starting at $\tilde x$. Then, 
$\tilde x_n = \tilde y_n$.\\
 \end{lemma}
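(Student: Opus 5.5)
We argue by induction on the index $i$, proving the stronger statement that $d(\tilde x_i,\tilde y_i)<2\epsilon$ for every $i=0,\ldots,n$. The final step then follows from discreteness. Since $f(\tilde x_n)=f(\tilde y_n)=y$ and $2\epsilon<i(f)/2<i(f)$, the definition of $i(f)$ forces $\tilde x_n=\tilde y_n$.

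The base case is trivial, since $\tilde x_0=\tilde y_0=\tilde x$. For the inductive step, suppose $d(\tilde x_i,\tilde y_i)<2\epsilon$. By hypothesis $d(x_i,y_i)<\delta$ with $x_i,y_i\in K$, so Lemma \ref{leb} applied at $\tilde x_i$ gives a unique lift $z_i$ of $y_i$ with $d(\tilde x_i,z_i)<\epsilon$. Then $d(z_i,\tilde y_i)<3\epsilon<i(f)$. Both $z_i$ and $\tilde y_i$ lie over $y_i$, so $z_i=\tilde y_i$, and in fact $d(\tilde x_i,\tilde y_i)<\epsilon$. This is where the assumption $\epsilon<i(f)/4$ is used.

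Next we pass to index $i+1$. The points $\tilde x_{i+1}$ and $\tilde y_{i+1}$ are the lifts of $x_{i+1}$ and $y_{i+1}$ chosen $\epsilon$-close to $\tilde x_i$ and $\tilde y_i$ respectively. Since $d(x_{i+1},y_{i+1})<\delta$, Lemma \ref{leb} gives a lift $w$ of $y_{i+1}$ with $d(w,\tilde x_{i+1})<\epsilon$. This yields
\[
d(w,\tilde y_{i+1})\le d(w,\tilde x_{i+1})+d(\tilde x_{i+1},\tilde x_i)+d(\tilde x_i,\tilde y_i)+d(\tilde y_i,\tilde y_{i+1})<4\epsilon<i(f).
\]
Both $w$ and $\tilde y_{i+1}$ lie over $y_{i+1}$, so $w=\tilde y_{i+1}$. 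Hence $d(\tilde x_{i+1},\tilde y_{i+1})<\epsilon$, which is even better than $2\epsilon$, and the induction closes.

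The main obstacle is bookkeeping the triangle inequalities so that every comparison stays strictly below $i(f)$; the hypothesis $\epsilon<i(f)/4$ is exactly what makes the four-term estimate work. One should also check that Lemma \ref{leb} applies, which needs $\epsilon\le i(f)/2$ (true here) and all chain points lying in $K$ (true by assumption). It is a good idea to carry the invariant as $d(\tilde x_i,\tilde y_i)<\epsilon$ from the start, which simplifies the write-up.
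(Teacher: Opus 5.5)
Your proof is correct and follows the paper's argument: you induct to show $d(\tilde x_i,\tilde y_i)<\epsilon$, using Lemma \ref{leb} and the four-term triangle inequality bounded by $4\epsilon<i(f)$, and then apply discreteness at $i=n$. The $2\epsilon$ invariant and the $3\epsilon$ upgrade step are redundant, since the $\epsilon$ bound already holds at $i=0$; starting there also absorbs the separate $3\epsilon$ base case the paper uses at $i=1$.
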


\begin{proof}  We will prove that for all $i=1, \ldots,n $, $d(\tilde x_i, \tilde y_i)<\epsilon< i(f)$ which proves the lemma as both $\tilde x_n, \tilde y_n$ lift $y$.
For $i=1$, note that as $d(x_1, y_1)<\delta$ there exists a unique lift $y_1'$ of $y_1$ such that $d(\tilde x_1, y_1')<\epsilon$.  Then, 
$d(y_1', \tilde y_1)\leq d(y_1', \tilde x_1) + d(\tilde x_1, \tilde x) + d(\tilde x, \tilde y_1) <3\epsilon <i(f) $ which gives $y_1'=\tilde y_1$ and therefore 
$d(\tilde x_1, \tilde y_1)<\epsilon$.  Suppose now that $d(\tilde x_i, \tilde y_i)<\epsilon$ and let $y_{i+1}'$ be the lift of $y_{i+1}$ such that 
$d(\tilde x_{i+1}, y_{i+1}')<\epsilon$.  Then, $d(y_{i+1}', \tilde y_{i+1})\leq d(y_{i+1}', \tilde x_{i+1}) +
d(\tilde x_{i+1}, \tilde x_i) + d(\tilde x_i, \tilde y_i)+d(\tilde y_i,\tilde y_{i+1}) <4\epsilon <i(f) $ which gives $y_{i+1}'=\tilde y_{i+1}$ and therefore 
$d(\tilde x_{i+1}, \tilde y_{i+1})<\epsilon$. This finishes the proof.\\
 \end{proof}

As a corollary, we obtain:\\
 
\begin{clly}
\label{cl2} 
Let $f:X \to Y$ be a discrete metric covering, $K$ compact subset of $Y$ and $\epsilon<i(f)/4$.  Then, there exists $\delta >0$ such that:\\

\begin{enumerate}
 \item any $\delta$-chain in $K$ starting at $x$ has a unique $\epsilon$-lift starting at $\tilde x\in f^{-1}(x)$.\\
 
 \item  if $\mathcal{A}= \{x_i\}_{i=0}^n$ and
$\mathcal{B}=\{y_i\}_{i=0}^n$ are  $\delta$-homotopic $\delta$-chains in $K$ from $x$ to $y$, then the $\epsilon$-lifts of $\mathcal{A}$ and $\mathcal{B}$ starting at

$\tilde x$ have the same
endpoint.\\
\end{enumerate}
\end{clly}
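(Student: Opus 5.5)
Fix $\epsilon<i(f)/4$ and let $\delta>0$ be the number given by Lemma \ref{leb} for this $\epsilon$ and the compact set $K$. Since $\epsilon<i(f)/4\le i(f)/2$, Lemma \ref{lifts} applies with the same $\delta$. The plan is to show that this single $\delta$ works for both items.

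Item (1) is then immediate from Lemma \ref{lifts}. Any $\delta$-chain in $K$ starting at $x$, together with a chosen $\tilde x\in f^{-1}(x)$, has a unique $\epsilon$-lift starting at $\tilde x$.

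For item (2), let $\mathcal{A}$ and $\mathcal{B}$ be $\delta$-homotopic $\delta$-chains from $x$ to $y$. Let $\mathcal{C}_1,\dots,\mathcal{C}_m$ be the $\delta$-$n$-chains from $x$ to $y$ given by Definition \ref{homotopia}: $\mathcal{C}_1$ refines $\mathcal{A}$, $\mathcal{C}_m$ refines $\mathcal{B}$, and $d(\mathcal{C}_i,\mathcal{C}_{i+1})<\delta$. The argument has three steps.

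\begin{enumerate}
\item \emph{Lifts of refinements.} If $\mathcal{C}$ is a refinement of a $\delta$-chain $\mathcal{A}$, then the $\epsilon$-lift of $\mathcal{C}$ from $\tilde x$ is obtained by repeating the points of the $\epsilon$-lift of $\mathcal{A}$ the same number of times. Indeed, that repeated sequence is an $\epsilon$-chain, since consecutive equal points are at distance $0<\epsilon$. It also projects onto $\mathcal{C}$. By the uniqueness part of Lemma \ref{lifts}, it is the $\epsilon$-lift of $\mathcal{C}$. In particular, the lifts of $\mathcal{C}_1$ and $\mathcal{A}$ have the same endpoint, and likewise for $\mathcal{C}_m$ and $\mathcal{B}$.
\item \emph{Adjacent chains.} For each $i$, the chains $\mathcal{C}_i$ and $\mathcal{C}_{i+1}$ are $\delta$-chains in $K$ from $x$ to $y$ with $d(\mathcal{C}_i,\mathcal{C}_{i+1})<\delta$. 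Lemma \ref{endpts} then gives that their $\epsilon$-lifts from $\tilde x$ have the same endpoint.
\item \emph{Chaining.} Applying step 2 for $i=1,\dots,m-1$ and then step 1 at both ends shows that the endpoints of the lifts of $\mathcal{A}$, $\mathcal{C}_1,\dots,\mathcal{C}_m$ and $\mathcal{B}$ all coincide.
\end{enumerate}

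The only delicate point is that the chains $\mathcal{C}_i$ must be $\delta$-chains, with the same $\delta$, so that both Lemma \ref{lifts} and Lemma \ref{endpts} apply to them. I read Definition \ref{homotopia} with the chain scale equal to the homotopy scale $\delta$, which is how the corollary is phrased. Under that reading no further obstacle arises.
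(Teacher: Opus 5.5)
Your proof is correct and follows the route the paper intends: the paper states this result as an immediate corollary, with item (1) coming from Lemma \ref{lifts} (using the $\delta$ of Lemma \ref{leb}) and item (2) from applying Lemma \ref{endpts} to each consecutive pair $\mathcal{C}_i,\mathcal{C}_{i+1}$ of the homotopy. You also fill in two details the paper leaves unstated: that lifting a refinement just repeats points of the original lift, so endpoints are preserved, and that the intermediate chains in Definition \ref{homotopia} are read as $\delta$-chains; the latter matches how the corollary is later used in Lemma \ref{alfa}.
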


\section{Chain-simply-connected spaces}\label{3}

In this section we define simple connectivity in terms of chains, for general metric spaces. Then the proof of Theorem \ref{t1} is given. 
In the last subsection conditions are given that imply chain simple connectivity for subsets of euclidean spaces.\\

We will say that a chain  $\mathcal{A} = \{x_i\}_{i=0}^{n}$  is closed if $x_0=x_n$ and is constant if $x_i = x$ for all $i = 0, \ldots, n$.\\

\begin{defi}
\label{simplementeconexo}
 A set \( K \) is chain-simply connected if for every \( \rho > 0 \) there exists \( 0< \varepsilon_0 < \rho \), such that for every
 \( 0<\varepsilon < \varepsilon_0 \) and for every \( n \in \mathbb{N} \), any closed \( \varepsilon -n \) chain in \( K \)  is \( \rho \)-homotopic to a
 constant chain. \\
\end{defi}

\begin{lemma}\label{alfa}  Let $f:X\to Y$ a discrete metric covering and $K\subset Y$ be compact and chain-simply connected.  Then, there exists $\alpha>0$ such that, 
for every $x\in K$ and $\tilde x\in f^{-1}(K)$:\\

\begin{enumerate}
 \item any $\alpha$-chain in $K$ starting at $x$ has a unique $\varepsilon$-lift starting at $\tilde x$\, where $\varepsilon=i(f)/4$.\\
 
 \item the $\varepsilon$-lift of any closed $\alpha$-chain
in $K$ is a closed chain .\\
\end{enumerate}
\end{lemma}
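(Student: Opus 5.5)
Set $\varepsilon = i(f)/4$. Corollary \ref{cl2} needs a strict inequality $\epsilon < i(f)/4$, so fix any $\epsilon_1$ with $0<\epsilon_1<i(f)/4$ and apply Corollary \ref{cl2} to $K$ and $\epsilon_1$. This gives $\delta_1>0$ such that every $\delta_1$-chain in $K$ has a unique $\epsilon_1$-lift from any prescribed preimage of its initial point. Any two $\delta_1$-homotopic $\delta_1$-chains from $x$ to $y$ then have $\epsilon_1$-lifts from $\tilde x$ with the same endpoint.

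Next we use chain-simple-connectedness of $K$ with $\rho = \delta_1$. This yields $\varepsilon_0\in(0,\delta_1)$ such that for every $0<\eta<\varepsilon_0$, every closed $\eta$-chain in $K$ is $\delta_1$-homotopic to a constant chain. We choose $\alpha$ with $0<\alpha<\varepsilon_0$. In addition we shrink $\alpha$ so that it is at most the $\delta$ that Lemma \ref{lifts} gives for $\varepsilon=i(f)/2$, which is allowed since $i(f)/4 \le i(f)/2$. This is needed for uniqueness of the $\varepsilon$-lift in (1).

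For (1), existence follows because an $\alpha$-chain is a $\delta_1$-chain. Its $\epsilon_1$-lift is in particular an $\varepsilon$-lift, since $\epsilon_1<\varepsilon$. For uniqueness, note that any $\varepsilon$-lift is also an $i(f)/2$-lift. Lemma \ref{lifts}, applied with $i(f)/2$ and $\alpha\le\delta$, says this lift is unique. So the $\varepsilon$-lift coincides with the $\epsilon_1$-lift.

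For (2), let $\mathcal A$ be a closed $\alpha$-chain at $x$. Since $\alpha<\varepsilon_0$, $\mathcal A$ is $\delta_1$-homotopic to a constant chain $\mathcal C$ at $x$. By Definition \ref{homotopia}, the intermediate chains $\mathcal C_i$ are $\alpha$-chains, hence $\delta_1$-chains, and consecutive ones are within $\delta_1$. So Corollary \ref{cl2}(2) applies.

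One step needs care here. Lifting a refinement gives the lift of the original chain with points repeated. Since $\epsilon_1<i(f)$, the unique lift of a repeated point must repeat, so the lift of a refinement has the same endpoint as the lift of the chain itself. Therefore the lift of $\mathcal A$ from $\tilde x$ ends where the lift of $\mathcal C$ ends. The lift of the constant chain is the constant chain at $\tilde x$, because the lifts are unique and the constant chain is a valid lift. Hence the lift of $\mathcal A$ is closed. By (1), the $\varepsilon$-lift equals the $\epsilon_1$-lift, so the conclusion holds for the $\varepsilon$-lift.

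The main obstacle is matching the quantifiers. The homotopy in Definition \ref{simplementeconexo} is $\rho$-close, but its chains are only $\varepsilon$-chains for the small $\varepsilon$. Corollary \ref{cl2} needs both the chains and the homotopy steps to be at scale $\delta_1$. This is why we choose $\rho=\delta_1$ and $\alpha<\varepsilon_0<\delta_1$.
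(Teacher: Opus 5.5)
Your proposal is correct and follows the paper's argument: take $\delta$ from Corollary \ref{cl2}, then apply chain-simple-connectedness with $\rho=\delta$ to choose $\alpha<\delta$, so that closed $\alpha$-chains are $\delta$-homotopic to constant chains and therefore lift to closed chains. Your extra bookkeeping fills in details that the paper's three-line proof leaves implicit: the auxiliary $\epsilon_1<i(f)/4$ respecting the strict inequality in Corollary \ref{cl2}, uniqueness via Lemma \ref{lifts} at scale $i(f)/2$, lifts of refinements being refinements of lifts, and the constant chain lifting to a constant chain.
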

\begin{proof} As $K$ is compact,  we know from Corollary \ref{cl2} that there exists $\delta>0$ such that $\delta$-chains have unique lifts ant that the lifts of any two 
$\delta$- homotopic $\delta$-chains in $X$ that
start at the same point also finish
at the same point.  As $K$ is chain-simply connected there exists $0<\alpha<\delta$ such that any closed $\alpha$-chain in $K$  is $\delta$-homotopic
to the constant chain.  In particular, any $\alpha$-chain in $K$ lifts to closed chains in $X$.\\
\end{proof}
%

\begin{defi}  We say that $x$ and $y$ are chain-connected in $X$ if for all $\epsilon>0$ there exists an $\epsilon$-chain in $X$ from $x$ to $y$.  The chain-component in $X$ of 
$x\in X$ is the set of points in $X$ that are chain-connected to $x$.  We say that $X$ is chain-connected, 
if $x$ and $y$ are chain-connected in $X$ for all $x,y\in X$.\\
The chain components of $X$ are the maximal chain connected subsets of $X$.\\
\end{defi}

\begin{rk}  If $X$ is connected, then it is chain-connected.  The converse is true if $X$ is compact because chain components are closed.\\
 
\end{rk}

The fact that the quasi circle is not chain-simply-connected (as was announced in the Introduction) can be deduced from the following result:\\

\begin{lemma}\label{unifcont}  Let $f:X\to Y$ a discrete metric covering and $K\subset Y$ be compact and chain-simply connected. Suppose, furthermore that $f$ is  uniformly continuous
in $f^{-1}(K)$.
If $f(x_0)=f(x_1)=x\in K$, $x_0\neq x_1$, then $x_0$ and $x_1$ are not chain-connected in $f^{-1}(K)$.\\
 
\end{lemma}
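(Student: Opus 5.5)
We argue by contradiction. Suppose $x_0\neq x_1$ with $f(x_0)=f(x_1)=x\in K$, and suppose $x_0$ and $x_1$ are chain-connected in $f^{-1}(K)$. The idea is to push a fine chain from $x_0$ to $x_1$ down to $K$, obtaining a closed fine chain at $x$. We then show that its lift starting at $x_0$ is the original chain, which ends at $x_1\neq x_0$. This contradicts Lemma \ref{alfa}(2), which says lifts of closed $\alpha$-chains are closed.

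\textbf{Choice of scales.} Let $\alpha>0$ be given by Lemma \ref{alfa}, with $\varepsilon=i(f)/4$. By uniform continuity of $f$ on $f^{-1}(K)$, choose $\eta>0$ such that $p,q\in f^{-1}(K)$ and $d(p,q)<\eta$ imply $d(f(p),f(q))<\alpha$. Also require $\eta\le\varepsilon$.

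\textbf{The chain and its projection.} Since $x_0,x_1$ are chain-connected in $f^{-1}(K)$, there is an $\eta$-chain $\tilde{\mathcal A}=\{z_i\}_{i=0}^n$ in $f^{-1}(K)$ with $z_0=x_0$ and $z_n=x_1$. Let $\mathcal A=\{f(z_i)\}_{i=0}^n$.
\begin{itemize}
\item Each $f(z_i)$ lies in $K$.
\item Consecutive points satisfy $d(f(z_i),f(z_{i+1}))<\alpha$ by the choice of $\eta$.
\item $f(z_0)=f(z_n)=x$.
\end{itemize}
So $\mathcal A$ is a closed $\alpha$-chain in $K$.

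\textbf{Identifying the lift.} The chain $\tilde{\mathcal A}$ is an $\eta$-chain with $\eta\le\varepsilon$, so it is an $\varepsilon$-chain in $X$. It satisfies $f(z_i)=f(z_i)$ for each $i$ and starts at $x_0$. Hence $\tilde{\mathcal A}$ is an $\varepsilon$-lift of $\mathcal A$ starting at $x_0$. By the uniqueness in Lemma \ref{alfa}(1), it is \emph{the} $\varepsilon$-lift. By Lemma \ref{alfa}(2), this lift must be closed, so $x_1=z_n=z_0=x_0$, a contradiction.

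\textbf{Main point of care.} The step that needs attention is the uniqueness statement invoked above. Lemma \ref{alfa}'s uniqueness comes from Lemma \ref{leb}, which gives a unique lift within distance $\varepsilon\le i(f)/2$ at each step. Any $\varepsilon$-lift must therefore agree step by step with the constructed one, so the inequality $\eta\le\varepsilon$ is exactly what is needed.

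The uniform continuity hypothesis enters only in projecting a fine chain upstairs to a fine chain downstairs; this is precisely where the argument would fail without it. One should also confirm that Lemma \ref{alfa} applies with $\tilde x=x_0\in f^{-1}(x)$.
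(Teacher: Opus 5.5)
Your proof is correct and follows the paper's argument: use uniform continuity to push a fine chain from $x_0$ to $x_1$ down to a closed $\alpha$-chain in $K$, then invoke Lemma~\ref{alfa}. Your extra requirement $\eta\le\varepsilon$, which makes the original chain \emph{the} unique $\varepsilon$-lift, spells out the step the paper dismisses as ``follows immediately.''
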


\begin{proof}  We know from Lemma \ref{alfa} that closed $\alpha$-chains in $K$ lift to closed chains in $f^{-1}(K)$.  As $f$ is uniformly continuous in $f^{-1}(K)$, there exists
$\epsilon>0$ such that if $x,y\in f^{-1}(K)$, then $d(x,y)<\epsilon$ implies $d(f(x),f(y))<\alpha$. This means that $\epsilon$-chains in $f^{-1}(K)$ are mapped to 
$\alpha$-chains in $K$. The result follows immediately. \\
\end{proof}

Note that  using the metric covering from the quasi-line to the quasi-circle one deduces that the quasi-circle is not chain-simply-connected from the previous result.\\

%
%
%
%
%

\begin{lemma}\label{kalfa}  Let $f:X\to Y$ be a discrete metric covering and $K\subset Y$ be a continuum.  
Then $f^{-1}(K)= \cup_{\alpha} K_{\alpha}$ where each $K_{\alpha}$ is closed and
$f|_{K_{\alpha}}:K_{\alpha}\to K$ is a metric covering for all $\alpha$. \\
Moreover, if $x\in K$ and $f^{-1}(x)= \cup_{\alpha} \{x_{\alpha}\}$, then each $K_{\alpha}$ is the chain-component of $x_{\alpha}$ in $f^{-1}(K)$.\\

\end{lemma}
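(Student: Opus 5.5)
The plan is to take the $K_\alpha$ to be the chain-components of $f^{-1}(K)$ and to check, in order, that they are closed, that $f$ maps each onto $K$, and that $f|_{K_\alpha}$ satisfies the metric covering conditions. The ``moreover'' clause will then come from showing that every chain-component meets $f^{-1}(x)$.

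\emph{Closedness.} Let $z$ be a limit of points $z_k$ in the chain-component $C$ of $x_\alpha$. For any $\epsilon>0$, pick $k$ with $d(z,z_k)<\epsilon$. Append $z$ to an $\epsilon$-chain from $x_\alpha$ to $z_k$; this gives an $\epsilon$-chain from $x_\alpha$ to $z$. Since $f^{-1}(K)$ is closed in $X$, the point $z$ lies in $f^{-1}(K)$, so $z\in C$. Hence $C$ is closed.

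\emph{Surjectivity and meeting the fibre.} I will use the lifting of chains from Lemma \ref{lifts} and Corollary \ref{cl2}, and I will take $\epsilon$ small ($\epsilon\le i(f)/4$, or smaller). Since $K$ is a continuum, it is chain-connected, so for every $\delta>0$ and every $y\in K$ there is a $\delta$-chain in $K$ from $x$ to $y$. Lifting this chain from $x_\alpha$ gives an $\epsilon$-chain in $f^{-1}(K)$ from $x_\alpha$ to a point of $f^{-1}(y)$. The subtlety is that the endpoint may depend on $\epsilon$; I resolve it using finiteness or discreteness. The fibre $f^{-1}(y)$ is $i(f)$-separated. For a sequence $\epsilon_k\to 0$ we get lifts $w_k\in f^{-1}(y)$, and I want some single $w\in f^{-1}(y)$ that is reached for arbitrarily small $\epsilon$. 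Uniqueness helps here: lifts are unique for $\epsilon\le i(f)/4$, and an $\epsilon_k$-lift is also an $\epsilon_1$-lift. By the uniqueness in Lemma \ref{lifts} (for the fixed $\epsilon_1$ and its $\delta$), any $\delta_k$-chain with $\delta_k\le\delta_1$ has its $\epsilon_k$-lift equal to its $\epsilon_1$-lift. However, different chains may still end at different lifts. So I argue differently. Fix $\epsilon$ and let $E_\epsilon\subset f^{-1}(y)$ be the set of points reachable from $x_\alpha$ by $\epsilon$-chains in $f^{-1}(K)$. These sets are nonempty and decrease as $\epsilon$ decreases. If $f^{-1}(y)$ is finite, their intersection is nonempty and any point in it is chain-connected to $x_\alpha$. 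In general, discreteness lets me pick $\epsilon$-chains passing near a fixed lift, and I would use compactness of $K$ together with Lemma \ref{leb} to show the reachable set stabilises. I expect this to be the main technical obstacle: getting a single point of $f^{-1}(y)$ in $C$ uniformly in $\epsilon$. The same argument with $y=x$ shows that every chain-component meets $f^{-1}(x)$. Chain-components are disjoint, so $f^{-1}(K)$ is their union, indexed by the fibre points they contain.

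\emph{Covering property.} Fix $y\in K$ and $\epsilon>0$ with $\epsilon<i(f)/4$, and take an $\epsilon$-admissible neighbourhood $U$ of $y$ in $Y$, whose sheets $V_j$ have diameter less than $\epsilon$. The key claim is that for each $j$, the set $V_j\cap f^{-1}(K)$ lies entirely in one chain-component. This must hold at every smaller scale as well. I get it by shrinking: for each $\epsilon'<\epsilon$, $U$ contains a smaller admissible neighbourhood, and any two points of $V_j\cap f^{-1}(K)$ are joined by an $\epsilon'$-chain. Such a chain is obtained by lifting, inside the sheet $V_j$ through the homeomorphism $(f|_{V_j})^{-1}$, a $\delta$-chain in $K$ whose existence is given by chain-connectedness of $K$. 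The catch is that this chain in $K$ may leave $U\cap K$, which is why $U\cap K$ need not be chain-connected; this is the same obstacle as above and I would handle it by the same stabilisation argument. Granting the claim, $f|_{K_\alpha}^{-1}(U\cap K)$ is the disjoint union of the relatively open sets $V_j\cap K_\alpha$. Each of these maps homeomorphically onto $U\cap K$, because $f|_{V_j}$ is a homeomorphism onto $U$ and full preimages of $K$ are retained. This gives the metric covering property for $f|_{K_\alpha}$.
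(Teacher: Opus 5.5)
Your outline matches the paper's: take the chain-components of the fibre points, show they are closed, prove surjectivity by lifting chains, then check the local covering property. Your closedness argument is fine.

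The genuine gap is the one you flagged yourself. Lifting $\delta$-chains shows that \emph{for every} $\epsilon$ \emph{some} point of $f^{-1}(y)$ is $\epsilon$-chain-reachable from $x_\alpha$. What you need is \emph{one} point that is reachable for every $\epsilon$. When $f^{-1}(y)$ is infinite, the stabilisation you hope to get from compactness of $K$ and Lemma \ref{leb} is not available, and in fact the statement is false in that generality.

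Let $f:\mathbb{R}\times D^2\to S^1\times D^2$, $f(s,z)=(e^{is},z)$, with the Euclidean metric upstairs; this is a discrete metric covering with $i(f)=2\pi$. Let $K=\bigcap_k T_k$ be a dyadic (Smale) solenoid, where $T_k$ is a solid torus winding $2^k$ times with cross-sections of diameter tending to $0$. Then:
\begin{itemize}
\item $f^{-1}(T_k)$ has $2^k$ components, each invariant under translation by $2\pi 2^k$, hence at positive mutual distance.
\item So for small $\epsilon$ no $\epsilon$-chain in $f^{-1}(K)$ passes from one of these components to another.
\item The chain-component of $x_\alpha$ is therefore the intersection of the tubes through $x_\alpha$, which is the graph of a continuous map $\mathbb{R}\to D^2$.
\item $f$ maps this graph onto a single composant of $K$, a dense proper subset.
\end{itemize}
Hence $f|_{K_\alpha}$ is not onto, and by density it is not a metric covering either. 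Moreover, only countably many of the uncountably many chain-components meet the countable fibre $f^{-1}(x)$. In this example your key claim in the covering step also fails: a sheet $V_j$ meets a Cantor set's worth of chain-components. The paper's proof makes the same quantifier slip: it treats ``for all $\epsilon$ there is an $\epsilon$-chain from $x_\alpha$ to some $y_\alpha\in f^{-1}(y)$'' as sufficient.

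So what is missing is a hypothesis. Two repairs are available.

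\emph{Finite fibres.} This holds, for example, when $X$ is compact (Lemma \ref{finite}), which is all Theorem \ref{t1} needs. Your decreasing-nonempty-finite-sets argument is then correct. Run from an arbitrary point of $f^{-1}(K)$, it also shows that every chain-component meets $f^{-1}(x)$. The covering property should then be proved without lifting chains inside a sheet:
\begin{itemize}
\item A finite-sheeted covering is proper, so $f^{-1}(K)$ is compact.
\item Its chain-components are therefore its finitely many connected components, which lie at some positive mutual distance $\eta$.
\item For $\epsilon<\eta$, each sheet $V_j$ of an $\epsilon$-admissible $U$ meets $f^{-1}(K)$ inside a single $K_\alpha$.
\item Hence $V_j\cap K_\alpha=V_j\cap f^{-1}(K)$, and this maps homeomorphically onto $U\cap K$.
\end{itemize}

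\emph{$K$ chain-simply-connected.} This is the setting of the next corollary. Apply Lemma \ref{alfa} to one sufficiently fine chain from $x$ to $y$ followed by the reverse of another. This shows all such chains lift from $x_\alpha$ to the same endpoint. Finer lifts coincide with coarser ones by uniqueness, so a single $y_\alpha$ works at every scale.
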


\begin{proof} Let $x\in K$ and $f^{-1}(x)= \cup_{\alpha} x_{\alpha}$. For all $\alpha$, let $K_{\alpha}$ be the chain-component of $f^{-1}(K)$ 
containing $x_\alpha$. Note that each $K_{\alpha}$ is closed. To show that $f|_{K_{\alpha}}:K_{\alpha}\to K$ is onto, we have to show that for all $y\in K$ and for all
$\epsilon >0$,
there exists an $\epsilon$-chain in $f^{-1}(K)$ from $x_{\alpha}$ to some $y_{\alpha}\in f^{-1}(y)$.\\

Take  $\epsilon < i(f)/2$ and $\delta >0$ as in Lemma \ref{lifts}. As $K$ is connected, for any $y\in K$
there exists a $\delta$-chain $\mathcal{A}$ in $K$ from $x$ to $y$.  Moreover, there is a unique $\epsilon$-chain $\tilde {\mathcal{A}}$  
starting at $x_{\alpha}$ and lifting $\mathcal{A}$.
Note that $\tilde {\mathcal{A}}$ is  an $\epsilon$-chain in $f^{-1}(K)$ from $x_{\alpha}$ to $y_{\alpha}\in f^{-1}(y)$ as wanted.\\

The remaining assertions follow immediately.\\
\end{proof}

%
%

%
%

\begin{clly}  Let $f:X\to Y$ be a  discrete metric covering and $K\subset Y$ be a chain-simply-connected continuum. 
Suppose that $f$ is uniformly continuous in $f^{-1}(K)$.
Then, $f^{-1}(K)= \cup_{\alpha} K_{\alpha}$ where
each $K_{\alpha}$ is closed, the 
union is disjoint, and
$f|_{K_{\alpha}}:K_{\alpha}\to K$ is a homeomorphism.   \\
\end{clly}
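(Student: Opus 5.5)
We prove the Corollary by combining Lemma \ref{kalfa} with Lemma \ref{unifcont}. Fix $x\in K$ and write $f^{-1}(x)=\cup_\alpha\{x_\alpha\}$. By Lemma \ref{kalfa}, $f^{-1}(K)=\cup_\alpha K_\alpha$, where $K_\alpha$ is the chain-component of $x_\alpha$ in $f^{-1}(K)$, each $K_\alpha$ is closed, and $f|_{K_\alpha}:K_\alpha\to K$ is a metric covering, in particular onto. Distinct chain-components are disjoint, so the only question about disjointness is whether $K_\alpha=K_\beta$ can happen for $\alpha\neq\beta$. That would mean $x_\alpha$ and $x_\beta$ are chain-connected in $f^{-1}(K)$, which Lemma \ref{unifcont} forbids, since $K$ is compact and chain-simply-connected and $f$ is uniformly continuous on $f^{-1}(K)$. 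Hence the $K_\alpha$ are pairwise disjoint and indexed bijectively by the points of $f^{-1}(x)$.

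Next we show each $f|_{K_\alpha}$ is injective. Let $y\in K$ and suppose $z\neq z'$ lie in $K_\alpha$ with $f(z)=f(z')=y$. Being in the same chain-component, $z$ and $z'$ are chain-connected in $f^{-1}(K)$. Applying Lemma \ref{unifcont} at the point $y\in K$ gives a contradiction. So $f|_{K_\alpha}$ is a continuous bijection onto $K$.

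It remains to see that $f|_{K_\alpha}$ is a homeomorphism. A bijective covering map is a homeomorphism, because covering maps are open and restrictions to the open sheets over an admissible neighborhood are homeomorphisms. Since $f|_{K_\alpha}$ is a metric covering by Lemma \ref{kalfa}, its inverse is locally the inverse of the sheet homeomorphism, hence continuous. Alternatively, one can use that $f|_{K_\alpha}$ is open as a covering and bijective.

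The main point to check carefully is the use of Lemma \ref{unifcont} for injectivity at an arbitrary $y$, not just the base point $x$. The lemma is stated for any point of $K$ and any two distinct preimages, so this step goes through directly.
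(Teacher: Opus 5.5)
Your first two steps match the paper's proof exactly: the decomposition and surjectivity come from Lemma~\ref{kalfa}, and disjointness and injectivity come from Lemma~\ref{unifcont}. The gap is in the last step. There you get continuity of $(f|_{K_\alpha})^{-1}$ from the clause of Lemma~\ref{kalfa} saying that $f|_{K_\alpha}:K_\alpha\to K$ is a metric covering.

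That clause is not proved in the paper. The proof of Lemma~\ref{kalfa} only establishes surjectivity and calls the rest immediate, and the paper's proof of this corollary does not use the clause. Yet it is exactly where the content of the homeomorphism claim lies. For a non-locally connected $K$ it is not automatic. A small sheet $U_i$ of $f^{-1}(U)$ through a point of $K_\alpha$ can also contain points of other chain components. Then $f(U_i\cap K_\alpha)$ is a proper, possibly non-open, subset of $U\cap K$, and the sheets of $f^{-1}(U)$ do not restrict to sheets of $f|_{K_\alpha}$.

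Without the present hypotheses the clause genuinely fails. Take the infinite cyclic covering $\mathbb{R}\times D^2\to S^1\times D^2$ and let $K$ be the dyadic solenoid embedded in the standard way as $\bigcap_k T_k$, with $T_{k+1}$ winding twice inside $T_k$. Then $f$ restricted to any chain component of $f^{-1}(K)$ is injective, and its image is a single arc component of $K$, so it is not even onto. So your final step assumes what has to be proved. By contrast, the point you single out as delicate (applying Lemma~\ref{unifcont} at an arbitrary $y$) is immediate.

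The missing ingredient is the isolation of $K_\alpha$ in $f^{-1}(K)$, which is what the paper adds. Suppose $d(K_\alpha,K_\beta)=0$ with $\beta\neq\alpha$. Then for every $\eta>0$ you can go by an $\eta$-chain from $x_\alpha$ to some $a\in K_\alpha$, jump to some $b\in K_\beta$ with $d(a,b)<\eta$, and continue by an $\eta$-chain to $x_\beta$. So $x_\alpha$ and $x_\beta$ would be chain-connected, contradicting Lemma~\ref{unifcont}.

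If there are infinitely many $K_\beta$, pairwise positive distance does not yet make $K_\alpha$ open. In that case use the fixed scale implicit in the proof of Lemma~\ref{unifcont}:
\begin{itemize}
\item Choose $\epsilon'\le i(f)/4$ such that $\epsilon'$-chains in $f^{-1}(K)$ are mapped to $\alpha_0$-chains, where $\alpha_0$ is the constant of Lemma~\ref{alfa}.
\item An $\epsilon'$-chain in $f^{-1}(K)$ joining two points of the same fibre is then the unique lift of a closed $\alpha_0$-chain, hence is itself closed.
\item The chain construction above therefore gives $d(K_\alpha,K_\beta)\ge\epsilon'$ for all $\beta\neq\alpha$.
\end{itemize}

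Now fix $z\in K_\alpha$ and take an admissible $U\ni f(z)$ whose sheets have diameter less than $\epsilon'$. The sheet $U_z$ through $z$ satisfies $U_z\cap f^{-1}(K)=U_z\cap K_\alpha$. So $f$ maps this neighbourhood of $z$ in $K_\alpha$ homeomorphically onto the open subset $U\cap K$ of $K$. Hence $f|_{K_\alpha}$ is a local homeomorphism onto $K$. Since it is also a continuous bijection, it is a homeomorphism.
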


\begin{proof} From Lemma \ref{kalfa} we know that $f^{-1}(K)= \cup_{\alpha} K_{\alpha}$ where each $K_{\alpha}$ is closed,
$f|_{K_{\alpha}}:K_{\alpha}\to K$ is onto
for 
all $\alpha$, and each $K_{\alpha}$ is the chain-component of $x_{\alpha}$ in $f^{-1}(K)$, where $f^{-1}(x)= \cup_{\alpha} \{x_{\alpha}\}$ for some $x\in K$.\\

Now, Lemma \ref{unifcont} gives us that the union is disoint and that $f|_{K_{\alpha}}:K_{\alpha}\to K$ is bijective.  Moreover, each $K_{\alpha}$ is isolated in $f^{-1}(K)$;
in fact, $d(K_{\alpha}, K_{\beta})>0$ if $\alpha\neq \beta$, otherwise $x_{\alpha}$ and $x_{\beta}$ would be chain-connected.  This implies, as $f$ is a local homeomorphism, 
that
$f|_{K_{\alpha}}:K_{\alpha}\to K$ is a homeomorphism, as we wanted.\\

\end{proof}

Note that as a consequence we obtain a slightly stronger version of Theorem \ref{t1} stated in the introduction (we are not assuming  $X$ to be compact).\\

We also obtain that chain-simply connected continua do not admit non-trivial connected uniformly continuous coverings:\\

\begin{clly}\label{self} Let $K$ be a chain-simply connected continuum, $f:X\to K$ a uniformly continuous discrete metric covering. 
Then, $X=\cup_{\alpha} K_{\alpha}$, a disjoint union of continua and $f:K_{\alpha}\to K$ is a homeomorphism.  
In particular, if $X$ is connected, then $f$ is a homeomorphism.\\
 
\end{clly}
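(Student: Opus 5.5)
This is essentially a specialization of the preceding Corollary, taking $Y=K$. Since $f:X\to K$ maps onto $K$, we have $f^{-1}(K)=X$. The hypotheses of that Corollary then read as follows: $f$ is a discrete metric covering, $K$ is a chain-simply-connected continuum, and $f$ is uniformly continuous on $f^{-1}(K)=X$. All of these are exactly what we assume. So the Corollary gives $X=\bigcup_\alpha K_\alpha$, a disjoint union of closed sets, with each restriction $f|_{K_\alpha}:K_\alpha\to K$ a homeomorphism.

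Next I would check that each $K_\alpha$ is a continuum. Each $K_\alpha$ is homeomorphic to $K$ via $f|_{K_\alpha}$, and $K$ is compact and connected, so $K_\alpha$ is compact and connected. This settles the first assertion.

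For the second assertion, assume $X$ is connected. I want to show there is exactly one $\alpha$. The proof of the Corollary shows that $d(K_\alpha,K_\beta)>0$ for $\alpha\neq\beta$. Since each $K_\alpha$ is compact, this alone does not make the union of the others closed when there are infinitely many pieces, so I will argue via discreteness instead. Two distinct pieces are at distance at least $i(f)$: for $p\in K_\alpha$ and $q\in K_\beta$ with $d(p,q)$ small, $d(f(p),f(q))$ is small. Take the lift $q'$ of $f(q)$ lying in $K_\alpha$. Then $q$ and $q'$ are distinct preimages of the same point, so $d(q,q')\ge i(f)$, yet both are close to $p$, which gives a contradiction.

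The cleaner route is the following. Each $K_\alpha$ is the chain-component of some $x_\alpha$. The chain-components of $X$ are then open as well as closed. Indeed, if $q$ is within $\varepsilon<i(f)/4$ of $K_\alpha$, then $q$ is $\varepsilon$-chain-connected to $K_\alpha$ for all small $\varepsilon$, so $q\in K_\alpha$ by maximality (using that $K_\alpha$ is chain-connected, being a continuum). Hence connectedness of $X$ forces a single piece, and $f$ is a bijective map which restricts to a homeomorphism $X=K_\alpha\to K$.

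The only delicate point is this last openness argument. The rest is a direct application of the previous Corollary.
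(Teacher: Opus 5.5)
Your reduction to the preceding corollary with $Y=K$ (so $f^{-1}(K)=X$) is correct, and so is the observation that each $K_\alpha$ is a continuum because $f|_{K_\alpha}$ is a homeomorphism onto $K$. This is exactly how the paper obtains the statement; it gives no further argument.

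The problem is in the step you flag as delicate.

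\emph{The ``cleaner route'' fails.} If $d(q,K_\alpha)=\eta>0$, a one-step chain joins $K_\alpha$ to $q$ only at scales $\varepsilon>\eta$. Nothing in your argument produces $\varepsilon$-chains for $\varepsilon<\eta$. You are conflating the $\varepsilon$-chain classes with chain-components. The former are indeed clopen. The latter are their intersections over all $\varepsilon$ and need not be open: in a Cantor set they are single points.

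\emph{The first route overstates its conclusion.} Distinct pieces need not be $i(f)$ apart. Take $X=[0,1]\cup[1.1,2.1]\subset\mathbb{R}$ mapped onto $K=[0,1]$ by $x\mapsto x$ and $x\mapsto x-1.1$. Here $i(f)=1.1$, but the pieces are at distance $0.1$. Once you justify why $q'$ is close to $p$ (uniform continuity of $f$, and of $(f|_{K_\alpha})^{-1}$ on the compact $K$), that route proves $d(K_\alpha,X\setminus K_\alpha)>0$ with a bound depending on $\alpha$. That weaker statement does suffice for openness, so this version can be repaired.

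\emph{No openness is needed at all.} A connected metric space is chain-connected: this is the remark after the definition of chain-components, since the set of points reachable from $x$ by $\varepsilon$-chains is clopen. By Lemma~\ref{kalfa}, the $K_\alpha$ are the chain-components of $X$ through the points of a single fiber. So if $X$ is connected they all coincide, leaving a single $K_\alpha=X$, and $f=f|_{K_\alpha}$ is a homeomorphism. Equivalently, Lemma~\ref{unifcont} shows directly that $f$ is injective when $X$ is chain-connected, and an injective covering onto $K$ is a homeomorphism.
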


\subsection{Simple connectivity on euclidean spaces.}

For the following results, we will assume that $Y \subset \mathbb{R}^{m}$ and that for every compact set $K \subset Y$ and for every $\rho > 0$, there exists an open, 
simply connected set $U \subset \mathbb{R}^{m}$ such that\\

$$K \subset U \subset K_{\rho} = \{ z \in \mathbb{R}^{m} : d(z, K) < \rho \}.$$\\

Note that if $\varepsilon_0 = d(K, \partial U)$, then for all $x, y \in K$ with $d(x, y) < \varepsilon_0$, it holds that the line segment
$[x, y] = \{tx + (1-t)y \mid t \in [0,1]\} \subset U$.\\

\begin{prop}\label{prop111}
Let \( Y \) be as in the hypotheses above and let \( K \) be a continuum with \( K \subset Y \). Then \( K \) is chain-simply connected. \\

  \end{prop}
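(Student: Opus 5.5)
Given $\rho>0$, I apply the standing hypothesis to $K$ and $\rho$ to get an open simply connected $U$ with $K\subset U\subset K_{\rho}$. I set $\varepsilon_0=\min\{\rho, d(K,\partial U)\}$, shrinking it slightly so that $0<\varepsilon_0<\rho$. If $\partial U$ is empty, then $U=\mathbb{R}^m$, and I simply take $\varepsilon_0<\rho$. Now let $0<\varepsilon<\varepsilon_0$ and let $\mathcal{A}=\{x_i\}_{i=0}^n$ be a closed $\varepsilon$-chain in $K$ based at $x_0$. By the remark preceding the proposition, each segment $[x_i,x_{i+1}]$ lies in $U$. Concatenating these segments gives a closed polygonal loop $\gamma:[0,1]\to U$ with $\gamma(i/n)=x_i$. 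Since $U$ is simply connected, there is a based homotopy $H:[0,1]\times[0,1]\to U$ with $H(\cdot,0)=\gamma$, with $H(\cdot,1)\equiv x_0$, and with $H(0,s)=H(1,s)=x_0$ for all $s$.

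The task is then to turn $H$ into a finite sequence of chains in $K$. The difficulty is that $H$ takes values in $U$, not in $K$. I therefore project: for each $z\in U\subset K_\rho$, I choose a point $p(z)\in K$ with $d(z,p(z))<\rho$, and I take $p(x)=x$ when $x\in K$. The map $p$ need not be continuous, but it moves points by less than $\rho$. By uniform continuity of $H$ on the compact square, I pick $N\ge n$, with $N$ a multiple of $n$, such that $|H(t,s)-H(t',s')|$ is tiny whenever $|t-t'|,|s-s'|\le 1/N$. I then define chains $\mathcal{C}_k=\{p(H(j/N,k/N))\}_{j=0}^N$ for $k=0,\dots,N$.

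These chains must satisfy the requirements of Definition \ref{homotopia}. Each $\mathcal{C}_k$ starts and ends at $x_0$. Consecutive points of $\mathcal{C}_k$ are within $2\rho+\eta$ of each other, where $\eta$ is the modulus bound, and consecutive chains $\mathcal{C}_k,\mathcal{C}_{k+1}$ are likewise within $2\rho+\eta$ termwise. $\mathcal{C}_N$ is constant. The chain $\mathcal{C}_0$ should be a refinement of $\mathcal{A}$: I compare it with the refinement of $\mathcal{A}$ that repeats each $x_i$ for a block of $N/n$ indices. Points of $\gamma$ on the segment $[x_i,x_{i+1}]$ lie within $\varepsilon<\rho$ of $x_i$, so $\mathcal{C}_0$ is termwise within about $2\rho$ of that refinement, and I insert the refinement as an extra first chain.

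The main obstacle is the bookkeeping of constants, because the definitions mix the chain mesh with the homotopy distance. The chains in the homotopy should be $\varepsilon$-chains, but the construction only yields mesh about $2\rho$. To fix this, I run the whole construction with $\rho'=\rho/3$ in place of $\rho$ when choosing $U$. This gives $\delta$-closeness between consecutive chains with $\delta<\rho$, as required. I would then interpret the chains in the homotopy as $\rho$-chains, which I expect is the paper's intent, since otherwise even refinements create a conflict. If strict $\varepsilon$-mesh is required, the fallback is to subdivide more finely: choose $N$ large and project only points within $\min(\varepsilon,\rho)/3$ of $K$. That in turn requires $U$ to lie in a thinner neighbourhood of $K$, which is again available from the hypothesis applied with a smaller $\rho$, at the cost of $\varepsilon_0$ depending on it. The second option is circular, however, since $\varepsilon_0$ would then depend on $\varepsilon$. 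So I would commit to the first reading and state explicitly the $\rho$-mesh convention being used.
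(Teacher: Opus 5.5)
Your proof is correct and follows essentially the same route as the paper: work at scale $\rho/3$, take $\varepsilon_0=d(K,\partial U)$, join the chain by segments into a loop in $U$, null-homotope it, sample the homotopy on a fine grid, and replace each sample by a point of $K$ within $\rho/3$, which yields closed $\rho$-chains that are termwise $\rho$-close. The paper's argument also produces $\rho$-chains rather than $\varepsilon$-chains, so the convention you state is the one its proof (and its later use in Lemma~\ref{alfa}) relies on; the only difference is that the paper makes the first sampled chain a refinement of $\mathcal{A}$ directly by choosing those projections among the $x_i$, whereas you prepend the block-repeated refinement as an extra chain within $\varepsilon+\rho/3<\rho$ of $\mathcal{C}_0$, which is cleaner.
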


\begin{proof}
We need to prove that  for every \( \rho > 0 \) there exists \( \varepsilon_0 > 0 \), \( \varepsilon_0 < \rho \), such that for every \( \varepsilon < \varepsilon_0 \) and for
every \( n \in \mathbb{N} \), any closed \( \varepsilon -n \) chain in \( K \) has a refinement that is \( \rho \)-homotopic to the constant chain.\\

Given $\rho > 0$, consider $\rho/3$ and let $U \subset \mathbb{R}^{m}$ be a simply connected set such that $K \subset U$ and $K \subset U \subset K_{\rho/3}$.
Let  $\varepsilon_0=d(K, \partial U)$  and let $\mathcal{A} = \{x_i\}_{i=1}^{n}$ be a closed $\varepsilon$-$n$-chain in $K$ with $\varepsilon <\varepsilon_0$.\\

As $d(x_i,x_{i+1})<\varepsilon <\varepsilon_0 $, the line segment \([x_i,x_{i+1}] = \{tx_i + (1-t)x_{i+1} \mid t \in [0,1]\} \subset U\).\\

 Let $\alpha_0: [0,1] \to U$ be a closed curve such that there exist $t^{'}_1, \ldots, t^{'}_n \in [0,1]$ with $\alpha_0(t^{'}_i) = x_i$ and 
 $\alpha_{0} |_{_{[t^{'}_i, t^{'}_{i+1}]}} =[x_i, x_{i+1}]$.
 Note that for all $t\in [0,1]$ there exists  $x_i$ such that $d(\alpha_0(t),x_i)<\rho/3 $.
 Let $\alpha_1: [0,1] \to U$ be such that $\alpha_1(t) \equiv x_1$.\\

Since $U$ is simply connected, there exists a homotopy $H: [0,1] \times [0,1] \to U$ of the curves $\alpha_0$ and $\alpha_1$, that is, 
$H(0,t) = \alpha_0(t)$ and $H(1,t) = \alpha_1(t)$. As $d_H(U, K) < \rho/3$, for each $s \in [0,1]$ and $t \in [0,1]$, there exists a point $y\in K$ 
such that $d(H(s,t), y) < \rho/3$. Therefore, by the compactness of $[0,1] \times [0,1]$, there exist two partitions $P = \{s_1, \ldots, s_m\}$ and $Q = \{t_1, \ldots, t_m\}$ of 
the interval $[0,1]$ and points $y_i^j \in K$ that satisfy the following properties:\\

\begin{enumerate}
\item $\text{dist}(H([s_j, s_{j+1}] \times [t_i, t_{i+1}]), y_i^j) < \rho/3$ for all $i,j \in\{ 1, \ldots, m-1\}$,
\item $d_H(H([s_j, s_{j+1}] \times [t_i, t_{i+1}]), H([s_{j+1}, s_{j+2}] \times [t_i, t_{i+1}]) ) < \rho/3$ for all $i,j$,
\item  $d_H(H([s_j, s_{j+1}] \times [t_i, t_{i+1}]), H([s_{j}, s_{j}] \times [t_{i+1}, t_{i+2}]) ) < \rho/3$ for all $i,j$,
\item $y_i^1\in \{x_1, \ldots, x_n\}$ for all $i=1,..., m$ (because $H(0,t) = \alpha_0(t)$ and for all $t\in [0,1]$ there exists  $x_i$ such that $d(\alpha_0(t),x_i)<\rho/3 $ ),
\item $y_i^m = x$ for all $i = 1, \ldots, m$ (because $H(1,t) = \alpha_1(t) \equiv x$).
\item  $y_1^{j}=y_m^{j}=x$   (because $H(s,0)=H(s,1)=x$)\\
\end{enumerate}

For each \( j \), let \( \mathcal{C}_j = \{y_i^{j}\}_{i=1}^{m} \). For all \( j \in \{1, ..., m\} \), by items (1), (2), (3), and (6), it holds that \( \mathcal{C}_j \) is a 
closed \( \rho \)-\( m \) chain. Note that the chain \( \mathcal{A}^{'} := \mathcal{C}_1 \) is a refinement of the chain \( \mathcal{A} \). Furthermore, by items (1) and (2), it
holds that \( d(\mathcal{C}_j, \mathcal{C}_{j+1}) < \rho \). By item (4), the chain \( \mathcal{C}_m \) is constant, which implies that the chain
\( \mathcal{A}^{'} \) is \( \rho \)-homotopic to the constant chain.\\

\end{proof}

  \begin{rk}\label{rk3} In particular, if $Y$ is a non-separating plane continumm, then $Y$ is chain-simply-connected.\\
 
\end{rk}

\begin{rk}\label{rk2}
Let \( Y \) be a non-separating continuum  in \( \mathbb{R}^2 \) with no interior. Then, for each continuum \( K \subset Y \):\\

\begin{enumerate}
    \item \( K \) does not separate the plane because \( Y \) has empty interior.\\
    
    \item Considering \( K \subset S^2 = \mathbb{R}^2 \cup \{\infty\} \), it follows that \( K^c \) is simply connected. Using the
    Riemann Mapping Theorem, given \( \rho > 0 \), there exists an open, simply connected set \( U \) with \( K \subset U \) and \( d_H(U, K) < \rho \), where \( d_H \) is
    the Hausdorff distance. Furthermore, \( U \) can be constructed such that \( \partial U \) is a Jordan curve.\\
    
    \item If \( d(K, \partial U) = \varepsilon_0 > 0 \), then for all \( x, y \in K \) with \( d(x, y) < \varepsilon_0 \), it holds that the line segment
    \([x, y] = \{tx + (1-t)y \mid t \in [0,1]\} \subset U\).\\
\end{enumerate}
\end{rk}

As a consequence of Proposition~\ref{prop111} and Remark~\ref{rk2}, we have the following result:\\

\begin{prop}\label{prop00}
Let \( Y \) be a continuum in \( \mathbb{R}^2 \) that has no interior and does not separate the plane, and let \( K \) be a subcontinuum of \( Y \). Then 
\( K \) is chain-simply connected.\\
  \end{prop}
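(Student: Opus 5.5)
The plan is to deduce Proposition~\ref{prop00} from Proposition~\ref{prop111}, taking $K$ itself (or $Y$) as the ambient set. Everything reduces to checking the standing hypothesis of that subsection: for every compact $K'\subset Y$ and every $\rho>0$ there must be an open simply connected $U\subset\mathbb{R}^2$ with $K'\subset U\subset K'_\rho$. Proposition~\ref{prop111} applies to continua, and the relevant compact sets here are subcontinua of $Y$, so it suffices to verify the hypothesis for a continuum $K\subset Y$.

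\textbf{Step 1: $K$ does not separate the plane.} Suppose $\mathbb{R}^2\setminus K$ had a bounded component $W$. Then $W$ is open. If $W\not\subset Y$, pick $p\in W\setminus Y$. Since $Y$ does not separate the plane, $p$ can be joined to $\infty$ in $\mathbb{R}^2\setminus Y\subset \mathbb{R}^2\setminus K$, contradicting the boundedness of $W$. Hence $W\subset Y$, so $Y$ has nonempty interior, a contradiction. This is item (1) of Remark~\ref{rk2}.

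\textbf{Step 2: construct the neighborhoods.} View $K\subset S^2$. Then $S^2\setminus K$ is connected and open, and its complement $K$ is connected, so it is simply connected. If $K$ is a single point the claim is trivial, since a closed chain in a point is constant. Otherwise, apply the Riemann mapping theorem to get a conformal map $\phi:\mathbb{D}\to S^2\setminus K$ with $\phi(0)=\infty$. Set $U_r=S^2\setminus\phi(\overline{D(0,r)})$ for $r$ close to $1$.

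These sets have the required properties:
\begin{itemize}
\item $U_r$ is open and contains $K$.
\item $U_r$ is bounded in $\mathbb{R}^2$, because it avoids a neighborhood of $\infty$.
\item $U_r$ is simply connected, being the complement in $S^2$ of a closed topological disk; indeed it is a Jordan domain.
\end{itemize}

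The main point is that $U_r\subset K_\rho$ once $r$ is close to $1$. The sets $\phi(\overline{D(0,r)})$ increase and exhaust $S^2\setminus K$. So the compact set $\{z\in S^2: d(z,K)\ge\rho\}\cup\{\infty\}$ lies in $S^2\setminus K$ and is eventually covered by the interiors $\phi(D(0,r))$, by compactness. This gives $K\subset U_r\subset K_\rho$.

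\textbf{Step 3: conclude.} We have $\varepsilon_0=d(K,\partial U_r)>0$, since $K$ is compact and contained in the open set $U_r$. Convexity of small balls then gives item (3) of Remark~\ref{rk2}. The hypothesis of Proposition~\ref{prop111} therefore holds with $Y$ replaced by $K$, and Proposition~\ref{prop111} yields that $K$ is chain-simply connected.

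The only delicate step is Step 1, and specifically the use of the empty interior of $Y$ to rule out bounded complementary components of $K$. The rest is a standard exhaustion argument.
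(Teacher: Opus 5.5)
Your proposal is correct and follows the paper's route. The paper obtains Proposition~\ref{prop00} directly from Proposition~\ref{prop111} via the three observations of Remark~\ref{rk2}, and your Steps 1--3 are exactly those observations with the details filled in: the use of the empty interior of $Y$ to rule out bounded complementary components of $K$, and the Riemann-map exhaustion giving Jordan domains $U_r$ with $K\subset U_r\subset K_\rho$. Your remark that the proof of Proposition~\ref{prop111} only needs the neighborhood hypothesis for the given continuum is also the right reading of the standing assumption.
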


  \begin{prop}
\label{plane}
A plane continuum is chain-simply-connected if and only if it does not separate the plane.\\
\end{prop}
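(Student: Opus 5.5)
One direction is already available. If a plane continuum $K$ does not separate the plane, then $S^2\setminus K$ is a simply connected open set containing $\infty$. The construction of Remark~\ref{rk2}(2), via the Riemann mapping theorem, gives for every $\rho>0$ a simply connected open $U$ with $K\subset U\subset K_\rho$. Proposition~\ref{prop111}, applied with $Y=K$, then yields chain-simple-connectedness. Remark~\ref{rk2} assumed empty interior, but that hypothesis was only used to get $K$ non-separating. So I would first note that this direction needs only the non-separation of $K$ itself.

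For the converse I argue by contrapositive: if $K$ separates the plane, I exhibit a nontrivial connected covering of $K$ and invoke Corollary~\ref{self}. Pick a bounded complementary component $W$ of $K$ and a point $p\in W$. Consider the exponential covering $\exp:\mathbb{C}\to\mathbb{C}\setminus\{p\}$, $z\mapsto p+e^z$, or more conveniently a finite one, $\pi_d:\mathbb{C}\setminus\{0\}\to\mathbb{C}\setminus\{p\}$, $w\mapsto p+w^d$ with $d\ge 2$. Set $X=\pi_d^{-1}(K)$. Then $X$ is compact, since $K$ avoids a neighbourhood of $p$ and is bounded. The restriction $\pi_d|_X:X\to K$ is a $d$-fold covering, hence a discrete metric covering by Lemma~\ref{finite}, and it is uniformly continuous since $X$ is compact.

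By Corollary~\ref{self}, if $K$ were chain-simply-connected then $X$ would split as a disjoint union of $d$ continua, each mapped homeomorphically onto $K$. In particular, the deck rotation $w\mapsto e^{2\pi i/d}w$ would permute these copies without fixed pieces, so there would be a clopen subset $X_0\subset X$ with $\pi_d|_{X_0}$ a homeomorphism. Inverting it gives a continuous map $s:K\to\mathbb{C}\setminus\{0\}$ with $s(x)^d=x-p$, that is, a continuous $d$-th root of $x-p$ on $K$.

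The main obstacle is showing that no such root exists when $p$ lies in a bounded component of $\mathbb{C}\setminus K$. This is the classical Borsuk/Eilenberg criterion: $x\mapsto x-p$ has a continuous $d$-th root (equivalently, a continuous logarithm) on $K$ if and only if $p$ lies in the unbounded component of $\mathbb{C}\setminus K$. I would prove the needed half directly. Suppose $s$ exists. Extend $s$ by Tietze to a continuous map $\bar s:\overline{W}\to\mathbb{C}$ with $\bar s=s$ on $\partial W\subset K$. Define $g=\bar s^{\,d}-(x-p)$ on $\overline W$; it vanishes on $\partial W$. Then a Brouwer degree/winding argument, using that $x-p$ has degree one around $p$ while any $d$-th power map has degree divisible by $d$, gives a contradiction. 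Concretely, the map $\phi=x-p$ on $\overline W$, restricted to the boundary, equals $\bar s^{\,d}$. Extend $\phi$ by $\bar s^{\,d}$ outside $W$ to get a map $\mathbb{C}\to\mathbb{C}$ agreeing with $x-p$ on $W$. Its local degree over $0$ is $1$, but it factors through the $d$-th power map in a way that forces the degree to be a multiple of $d$. Making this degree bookkeeping rigorous, working on the sphere and using that $W$ is a bounded open set with $\partial W\subset K$, is where I expect the work to be. If it proves delicate, I would instead cite Eilenberg's theorem from Kuratowski's Topology.
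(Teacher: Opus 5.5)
Your argument is correct, and its first half is the paper's. The forward direction is exactly Remark~\ref{rk3}, i.e.\ Proposition~\ref{prop111} applied to the Riemann-map neighbourhoods of Remark~\ref{rk2}. For the converse, both proofs take a finite cyclic covering that unwinds around a bounded complementary domain of $K$ and apply the splitting result. The paper uses an abstract degree-two covering of an annulus $A$ in which $K$ is essential, together with Theorem~\ref{t1}; you use $w\mapsto p+w^d$ together with Corollary~\ref{self}.

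The routes diverge in how the splitting is contradicted. The paper stays in the covering picture and argues with an essential annulus that is supposed to separate $K_1$ from $K_2$ while being covered $2{:}1$. That planar step is only sketched: as literally written, $f^{-1}(U)\supset f^{-1}(K)$ cannot miss $K_2$. Making it rigorous needs a Janiszewski-type separation argument plus the deck involution. You instead read the splitting analytically: a sheet mapped homeomorphically onto $K$ is a continuous branch of $(x-p)^{1/d}$ on $K$. The contradiction then reduces to the classical Borsuk--Eilenberg separation criterion. This gives a transparent mechanism and a citable endgame, at the cost of importing degree theory instead of staying within the paper's chain/covering language.

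Three points need tightening.

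\emph{Discreteness.} Lemma~\ref{finite} does not give discreteness of $\pi_d|_X$, but it is immediate. Distinct points of a fibre differ by a nontrivial $d$-th root of unity, so $i(\pi_d|_X)\ge 2\sin(\pi/d)\min_X|w|>0$.

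\emph{The degree argument.} Your ``concrete'' gluing is not well set up. $\bar s$ lives only on $\overline W$, and an extension outside $W$ may acquire zeros, so the glued map need not have degree $1$ over $0$. Do the bookkeeping on $W$ alone. Since $\bar s^{\,d}=x-p\neq0$ on $\partial W\subset K$, one has $\deg(\bar s^{\,d},W,0)=\deg(x-p,W,0)=1$. The product formula gives $\deg(\bar s^{\,d},W,0)=d\cdot\deg(\bar s,W,\Delta_0)$, where $\Delta_0$ is the component of $\mathbb{C}\setminus\bar s(\partial W)$ containing $0$ (the term is $0$ if $\Delta_0$ is unbounded). This is impossible for $d\ge2$.

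\emph{Citing Borsuk/Eilenberg.} The classical statement is about logarithms, and ``$d$-th root $\Leftrightarrow$ logarithm'' is not a formal equivalence. If you would rather cite it, use $z\mapsto p+e^z$ instead of $\pi_d$. Corollary~\ref{self} does not require $X$ compact. On $X=\{z: p+e^z\in K\}$ the real parts are bounded, so this map is a uniformly continuous discrete metric covering with $i=2\pi$. A sheet then yields a continuous logarithm of $x-p$ on $K$ directly.
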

\begin{proof}
If $K$ does not separates the plane then by Remark \ref{rk3} $K$ is chain-simply-connected, so it remains to prove the other direction.\\

Assume that $K$  is chain-simply-connected and suppose for a contradiction that it is separating. This is equivalent to the existence of a closed annulus $A$ containing $K$ in its
interior and such that $K$ is essential in 
$A$, meaning that $K$ 
separates the two components of the boundary of $A$.  Let $f$ be any degree two covering map of $A$. Now, by Theorem \ref{t1}, $f^{-1}(K)=K_1 \cup K_2$, and $f|_{K_i}:K_i\to K$
is a homeomorphism. Note that as $K_1\cap K_2=\emptyset$ we can find  an essential annulus $U$  containing $K$ such that $f^{-1}(U)$ contains $K_1$ and $f^{-1}(U)\cap K_2=\emptyset$. 
Now, the preimage of any
essential annulus is an essential annulus such that the restriction of $f$ to this annulus is $2:1$.  This contradicts the fact that $f^{-1}(U)\cap K_2=\emptyset$.\\

%

\end{proof}

\section{Examples}\label{4}
In this section we review known examples of indecomposable continua supporting self-coverings, together with contrasting cases where none exist beyond the trivial ones; see \cite{timm-survey} for a broader survey of constructions and obstructions to self-covers in general. As explained in the introduction, understanding the dynamics of coverings and branched coverings on indecomposable continua is key to solving several well known open problems.\\

{\bf I. Non-separating planar continua.} Note that it follows from Corollary \ref{self} and Proposition \ref{plane} that non-separating planar continua such as the pseudo-arc or the Knaster continuum do not admit self-coverings other than homeomorphisms. However, in the last section we will show that they both admit branched coverings with critical points. In fact, Corollary \ref{self} rules out nontrivial coverings of such continua $K$ by \emph{any} continuum $X$, not merely self-coverings. Note that exactly 2-to-1 maps from continua onto the Knaster continuum do exist, as shown by Dębski, Heath and Mioduszewski \cite{dhm}; by the above, such maps cannot be coverings in our sense. This is consistent with the general theory of exactly $k$-to-1 maps, which need not be coverings even in comparatively tame settings; see \cite{heath-survey} for a survey.\\

{\bf II. The pseudo-circle.} It is known that for any integer $k>1$, the pseudo-circle is a $k$-fold covering space of itself --- the case $k=2$ is due to Heath \cite{heath}, generalized to all $k$ by Bellamy \cite{bellamy}. However, the following remains unknown: does there exist a plane branched covering with a totally invariant pseudo-circle?\\

{\bf III. The derived from Anosov endomorphism continuum.} The first example of a sphere branched covering supporting a totally invariant indecomposable continuum was given in \cite{plig}. Whether or not there exists a rational function with an indecomposable continuum as its Julia set is a well-known unsolved problem.\\

{\bf IV. The solenoid.} It is known that the dyadic solenoid admits $d:1$ self-coverings for all odd integers $d$ (see \cite{chino} and \cite{sole}; see also \cite{timm-survey} for this and related constructions). As we will use this fact to give an example of a branched covering of the Knaster continuum with a peculiar critical portrait, we give a proof of it in the next section.\\

{\bf V. Pseudosolenoids.} Unlike the solenoid, pseudosolenoids are hereditarily indecomposable. Using Fox's theory of overlays, Boroński and Sturm proved that a $P$-adic pseudosolenoid $S$ admits a $d$-fold covering onto itself if and only if $d$ is relatively prime to all but finitely many primes in $P$, and that any connected finite-sheeted covering space of $S$ is homeomorphic to $S$ \cite{boronski-sturm}.\\

\begin{rk} It follows that the pseudo-circle, the derived from Anosov endomorphism continuum, the solenoid, and those pseudosolenoids admitting a nontrivial self-covering are not
chain-simply-connected.
\end{rk}

\section{covering maps of solenoids}\label{5}

If $f:X\to X$, we denote $\varprojlim (X,f)$ the inverse limit space of $f$, that is, the space of sequences $\{x_n\}_{n\in\N}$ such that 
$f(x_n)=x_{n-1}$ for every $n>0$.\\

If $g:X\to Y$ verifies $gf=hg$ for some $h:Y\to Y$, then $g$ induces a map
$\hat g:\varprojlim (X,f)\to \varprojlim (Y,h)$, where $\hat g(x_0, x_1, \ldots, x_n, \ldots) = (g(x_0), g(x_1), \ldots, g(x_n), \ldots) $.\\

The diadic solenoid $S_2$ is the inverse limit set of the map $m_2:S^1\to S^1$ given by $m_2(z)=z^2$; also, for every $d\in \N$, let $m_d: S^1\to S^1$ be $m_d(z)=z^d$. \\

\begin{thm}\label{solenoide}  For every odd natural number $d$, the map $\hat m_d:S_2\to S_2$ is a degree $d$ self covering.\\

If $d=2^k(2l+1)$, then $\hat m_d:S_2\to S_2$ is a degree $2l+1$ self-covering. \\
\end{thm}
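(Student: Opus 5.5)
The plan is to treat the odd case by a direct analysis of fibres and local inverse branches, and then reduce the general case to it by factoring out powers of $2$. Throughout, write points of $S_2$ as sequences $(z_n)_{n\ge0}$ with $z_{n+1}^2=z_n$, and let $\pi_0:S_2\to S^1$ be the projection to the $0$-th coordinate. Since $m_d m_2=m_2 m_d$, the map $\hat m_d$ is well defined. It is continuous because it is continuous in each coordinate. Since $S_2$ is compact, Lemma \ref{finite} means it will be enough to show that $\hat m_d$ is a surjective local homeomorphism with every fibre of exactly $d$ points; the evenly covered neighborhoods then follow by the standard compactness argument.

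\emph{Fibres, for $d$ odd.} Fix $(z_n)\in S_2$. A preimage is a sequence $(w_n)$ with $w_n^d=z_n$ and $w_{n+1}^2=w_n$. I will show that the choice of $w_0$ among the $d$ roots of $z_0$ determines the rest uniquely, by induction on $n$.
\begin{itemize}
\item \emph{Uniqueness.} If $w_{n+1}$ and $w_{n+1}'$ both work, their ratio $u$ satisfies $u^2=1$ and $u^d=1$. Since $\gcd(2,d)=1$, this forces $u=1$.
\item \emph{Existence.} Pick any $r$ with $r^d=z_{n+1}$. Then $(r^2)^d=z_{n+1}^2=z_n$, so $\zeta=w_n/r^2$ is a $d$-th root of unity. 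Because $2$ is invertible mod $d$, we can write $\zeta=\eta^2$ with $\eta^d=1$. Then $w_{n+1}=r\eta$ works.
\end{itemize}
Hence $\hat m_d$ is onto and exactly $d$-to-$1$.

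\emph{Local homeomorphism.} Take a small open arc $I\subset S^1$ on which $m_d$ is injective, and set $J=m_d(I)$. The recursion above produces a map $\pi_0^{-1}(J)\to\pi_0^{-1}(I)$ which inverts $\hat m_d$ there. To show this inverse is continuous, I will check continuity coordinatewise, by induction on $n$. The coordinate $w_0=(m_d|_I)^{-1}(z_0)$ is continuous. Suppose $w_n$ depends continuously on the point. Near a given point, $w_{n+1}$ is the unique solution of $w^d=z_{n+1}$, $w^2=w_n$. Any local continuous branch of $z_{n+1}^{1/d}$, multiplied by the locally constant root of unity $\eta$ found above, gives a continuous local formula for it. Local constancy of $\eta$ holds because the roots of unity form a discrete set and uniqueness pins the solution down. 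Therefore $\hat m_d$ restricted to $\pi_0^{-1}(I)$ is a homeomorphism onto the open set $\pi_0^{-1}(J)$. Moreover, $\hat m_d^{-1}(\pi_0^{-1}(J))$ is the disjoint union of the $d$ sets $\pi_0^{-1}(I_j)$, one for each component $I_j$ of $m_d^{-1}(J)$. So $\pi_0^{-1}(J)$ is evenly covered, and $\hat m_d$ is a degree $d$ covering. I expect this continuity-of-branches step to be the main technical point. The cleanest route is probably to check that the inverse is a continuous bijection between compact sets, namely the closures over slightly shrunk closed arcs, so that it is automatically a homeomorphism.

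\emph{General $d=2^k(2l+1)$.} First, $\hat m_2$ is a homeomorphism of $S_2$. Indeed, $\hat m_2(z_0,z_1,\dots)=(z_0^2,z_0,z_1,\dots)$, and its inverse is the shift $(z_n)\mapsto(z_{n+1})$, which is continuous. Since $m_d=m_2^k\circ m_{2l+1}$, we get $\hat m_d=\hat m_2^{\,k}\circ\hat m_{2l+1}$. A covering of degree $2l+1$ composed with a homeomorphism is again a covering of degree $2l+1$, which proves the second assertion.
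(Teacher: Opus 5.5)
Your proof is correct and follows the paper's route. You reduce to odd $d$ because $\hat m_{2^k}$ is a homeomorphism, then show each fibre of $\hat m_d$ has exactly $d$ points by choosing $w_0$ among the $d$-th roots of $z_0$ and extending uniquely coordinate by coordinate; your $\gcd(2,d)=1$ argument supplies the justification for the paper's bare assertion that there is exactly one admissible $x_1^j$ at each step. You also go further than the paper by verifying that $\hat m_d$ is a local homeomorphism with evenly covered sets $\pi_0^{-1}(J)$, which the paper leaves implicit. That continuity step becomes immediate from the explicit formula $w_{n+1}=z_{n+1}\,w_n^{-(d-1)/2}$, which satisfies both $w_{n+1}^2=w_n$ and $w_{n+1}^d=z_{n+1}$.
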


This result is already contained in \cite{sole}.  We give a simple proof here for the sake of a self-contained exposition.\\

\begin{proof} 
We first note that if $d$ is an odd integer, then $m_d(x)=m_d(y)$ only if $m_2(x)=m_2(y)$. \\
%

It is clear that if $d=2^k$, then $\hat m_d:S_2\to S_2$
is a homeomorphism.  So, we just have to prove the assertion for $d$ odd.\\

Now, let $d$ be an odd integer and ${\bf x}=(x_0, x_1, \ldots, x_n, \ldots)\in S_2$.  We claim that ${\bf x}$ has exactly $d$ preimages by $\hat m_d$. 
Let $x_0^1, \ldots, x_0^d$ be the $d$ preimages of $x_0$ by $m_d$.  Note that for each $j\in \{1, \ldots, d\}$, 
there is exaclty one point $x_1^j$ such that $(x_1^j)^d=x_1$ and $(x_1^j)^2=x_{0}^j$. We finish the proof inductively.\\

%
%
\end{proof}

%
%
%

%
%
%
%

\section{branched coverings}\label{6}
 
 In this section we define metric branched coverings and prove Theorem \ref{t2}.\\

\begin{defi}
\label{bc} 
Let $X$ be a metric space. We say that a continuous map $f:X \to Y$ is a {\em branched metric covering}, if for every point $y\in Y$ and any positive number $\epsilon$ 
there exists a neighborhood $U$ of $y$ such that:\\

\begin{enumerate}
\item
$f^{-1}(U) =\cup_{i\in I} U_i$ is a disjoint union of open sets $U_i$ each one of diameter less than $\epsilon$, 
\item 
each $U_i$ contains precisely one point $x_i\in f^{-1}(y)$, and
\item
$f|_{U_i\setminus \{x_i\}}:U_i\setminus \{x_i\}\to U\setminus \{y\}$ is a degree $r_i\geq 1$ metric covering. \\
\end{enumerate} 
Such a neighborhood $U$ of $y$ is called $\epsilon$-admissible and the $U_i's$ are called the parts of $f^{-1}(U)$. 
\end{defi}

The number $r_i$ is called the local degree of $f$ at $x_i$ denoted $\deg(f,x_i)$. Note that at points $x_i\in U_i$ where $\deg(f,x_i)=1$, 
$f|_{U_i}:U_i\to U$ is a homeomorphism.
Note also that the definition allows different local degrees at points in the same fiber $f^{-1}(y)$.
A critical point is a point $p\in X$ with $\deg(f,p)\geq 2$.  A critical value is a point $y\in Y$ such that $f^{-1}(y)$ contains a critical point.  
Note that the set of critical points
is discrete in $X$.  The set of critical points is denoted $C_f$. If $c\in C_f$, then the multiplicity of
$c$ is $m(c):=\deg(f,c)-1$.
\\

%
%

We say that the branched metric covering $f:X\to Y$ has degree $d$ if $\# f^{-1}(y)=d$ for all $y\in Y \setminus f(C_f)$. Note that in this case
$$
\sum_{x\in f^{-1}(y)} \deg(f,x)=d
$$
holds for every $y\in Y$.\\

%
%
%
%

In what follows, we will assume that $X$ and $Y$ are compact connected metric spaces and $f:X\to Y$ is a degree $d$ branched metric covering.\\

%
%
%
%
%

For the proof of Theorem \ref{t2} we introduce the concept of conglomerate:  \\

\begin{defi}
\label{conglo}
Let $K$ be compact and $\mathcal V$ an open cover of $K$. Say that $V,W\in \mathcal V$ are related if there exist $n\in\N$ and a sequence
$V_0,\ldots,V_n$ in $\mathcal V$ such that $V_0=V$, $V_n=W$ and $V_i\cap V_{i+1}\neq\emptyset$ 
for every $0\leq i<n$. The equivalence classes of this equivalence relation are called $K$-$\mathcal V$-conglomerates.\\
\end{defi}

If $\mathcal U$ is a finite set of open sets, define $mesh(\mathcal U)$ as the maximum of the diameters of the elements of $\mathcal U$.
For a conglomerate $\mathcal V'$ define $\cup \mathcal V'=\cup\{V:V\in \mathcal V'\}$. As the space is not assumed to be locally connected, 
we cannot expect $\cup \mathcal V'$ to be connected. However, if 
$K$ is connected and $\mathcal V$ is a cover of $K$ such that $K\cap V\neq\emptyset$ for every $V\in \mathcal V$, then $\mathcal V$ is a 
$K$-$\mathcal V$-conglomerate. Also, if $K$ is not connected and $mesh( \mathcal V)$ is small, then $ \mathcal V$ is not a conglomerate. 
A refinement of a cover $\mathcal V$ of $K$ is a cover $\mathcal V'$ of $K$ such that every element of $\mathcal V'$ is contained in an element of
$\mathcal V$.\\

With all this in mind, the following can be easily verified:\\

\begin{lemma} 
\label{conexos}
Let $\mathcal V_n$ be a sequence of open covers of $K$ such that for every $n$, $\mathcal V_n$ is a refinement of $\mathcal V_{n-1}$ and
$mesh (\mathcal V_n)$ converges to zero. Assume also that for every $n$, $\mathcal V'_n$ is a $K$-$\mathcal V$-conglomerate 
and that $\cup \mathcal V'_n$ is a decreasing sequence.\\
Then $\cap_n\cup\mathcal V_n'$ is a component of $K$. \\
\end{lemma}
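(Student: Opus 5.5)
The idea is to replace the open unions $\cup\mathcal V'_n$ by their traces $A_n:=K\cap\cup\mathcal V'_n$. I will show that the $A_n$ are clopen in $K$, that their intersection is $\cap_n\cup\mathcal V'_n$, and that this intersection is connected. Throughout I use that every element of each cover meets $K$; otherwise the element is irrelevant to the cover of $K$ and can be discarded.

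\emph{Step 1: each $A_n$ is clopen in $K$.} Distinct $K$-$\mathcal V_n$-conglomerates have disjoint unions: if $V\in\mathcal V'_n$ and $W\notin\mathcal V'_n$ intersected, they would be related. Hence $K\setminus A_n$ is covered by the union of the other conglomerates, which is open. So $A_n$ is open and closed in $K$, and in particular compact. Each $A_n$ is nonempty, and $A_{n+1}\subset A_n$ because the unions decrease.

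\emph{Step 2: $C:=\cap_n\cup\mathcal V'_n$ equals $\cap_n A_n$.} Let $z\in C$. For each $n$, $z$ lies in some $V\in\mathcal V'_n$, and $V$ meets $K$, so $d(z,K)\le mesh(\mathcal V_n)\to 0$. Since $K$ is compact, $z\in K$. Therefore $C=\cap_n A_n$, which is a nonempty compact set.

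\emph{Step 3: $C$ is connected.} This is the main point. Suppose $C=P\cup Q$ with $P,Q$ disjoint, nonempty and closed, and let $\eta=d(P,Q)>0$. Put $U_P=\{z:d(z,P)<\eta/3\}$ and $U_Q=\{z:d(z,Q)<\eta/3\}$. The sets $A_n\setminus(U_P\cup U_Q)$ are compact and decreasing with empty intersection, so they are empty for large $n$. Choose $n$ with, in addition, $mesh(\mathcal V_n)<\eta/3$.

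Every $V\in\mathcal V'_n$ meets $K$, and those points of $K$ lie in $A_n\subset U_P\cup U_Q$. Since $diam(V)<\eta/3$, $V$ is contained either in the $2\eta/3$-neighbourhood of $P$ or in the $2\eta/3$-neighbourhood of $Q$. These two neighbourhoods are disjoint. Both kinds of elements occur in $\mathcal V'_n$, because $P$ and $Q$ are nonempty and contained in $\cup\mathcal V'_n$. A chain $V_0,\dots,V_m$ in $\mathcal V'_n$ from an element of the first kind to one of the second would then contain consecutive elements of different kinds, and these are disjoint. This contradicts the definition of conglomerate, so $C$ is connected.

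\emph{Step 4: maximality.} Let $L\subset K$ be connected and meet $C$. For each $n$, $L$ meets the clopen set $A_n$, so $L\subset A_n$. Hence $L\subset C$, and $C$ is a component of $K$.

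The only delicate step is Step 3. The intermediate sets in a chain need not be near $C$, and the compactness argument that squeezes $A_n$ into $U_P\cup U_Q$ is what handles them.
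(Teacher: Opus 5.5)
Your overall route is sound. The paper gives no proof of this lemma (it only says it ``can be easily verified''), so your argument supplies the missing verification. Steps 1, 2 and 4 are correct as written.

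Step 3, however, contains a false claim that the chain argument relies on. With $U_P,U_Q$ the $\eta/3$-neighbourhoods and $\mathrm{diam}(V)<\eta/3$, you correctly get that each $V\in\mathcal V'_n$ lies in the $2\eta/3$-neighbourhood of $P$ or of $Q$. But these two neighbourhoods need not be disjoint: $r$-neighbourhoods of two sets at distance $\eta$ are only guaranteed disjoint for $r\le\eta/2$. For example, take $P=\{0\}$, $Q=\{1\}\subset\mathbb R$. Then $V=(0.25,0.55)$ and $W=(0.45,0.75)$ have diameter $0.3<1/3$, contain $0.3\in U_P$ and $0.7\in U_Q$ respectively, and still intersect. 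So ``consecutive elements of different kinds are disjoint'' does not follow from your constants.

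The repair is only a change of constants. Take $U_P,U_Q$ to be the $\eta/4$-neighbourhoods and $\mathrm{mesh}(\mathcal V_n)<\eta/4$. Then every $V\in\mathcal V'_n$ lies in the $\eta/2$-neighbourhood of exactly one of $P,Q$. These neighbourhoods are disjoint, because a common point would give $p\in P$, $q\in Q$ with $d(p,q)<\eta$. With this change your chain argument goes through verbatim.

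A secondary point concerns the assumption that every element of each $\mathcal V_n$ meets $K$. It is not a harmless normalisation, since discarding elements changes the conglomerates and their unions. It is a genuine extra hypothesis, and the lemma is false without it. Take $K=\{0,1\}\subset\mathbb R$ and let $\mathcal V_n$ consist of the intervals of radius $2^{-n}$ centred at $k2^{-n}$, $0\le k\le 2^n$. Each $\mathcal V_n$ refines $\mathcal V_{n-1}$ and is a single conglomerate. The unions $(-2^{-n},1+2^{-n})$ decrease to $[0,1]$, which is not a component of $K$. You should state this as an implicit hypothesis of the lemma. It does hold in the paper's application, since each part of $f^{-1}(U)$ contains a preimage of a point of $K$.
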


\noindent
{\bf Proof of Theorem \ref{t2}.}\\
Let $\varepsilon>0$ be given and let $\mathcal U$ be a cover of $K$ made by a finite number of $\varepsilon$-admissible neighborhoods of points of $K$.
Define $\mathcal V$ as the collection of the parts of $\mathcal U$ (see definition \ref{bc}).\\
It is claimed that for every $f^{-1}(K)$-$\mathcal V$-conglomerate $\mathcal V'$, it holds that
$$
f(\cup \mathcal V')\supset K
$$
Indeed, let $x\in f^{-1}(K)\cap (\cup\mathcal V')$, and take any point $y$ in $K$. It must be shown that $y$ has a preimage in $\cup V'$.  
As $K$ is connected, there exists a sequence $U_0,\ldots, U_n$ of elements of $\mathcal U$ such that $U_i\cap U_{i+1}\neq\emptyset$, $f(x)\in U_0$ and 
$y\in U_n$. Let $V_0$ be the part of $f^{-1}(U_0)$ that contains $x$. Note that $V_0\in\mathcal V'$. As $f|_{V_0}$ is onto $U_0$, 
it follows that there exists a part $V_1$ of $f^{-1}(U_1)$ intersecting $V_0$. Obviously $V_1\in\mathcal V'$. By an induction argument it 
can be seen that there exists a 
sequence $V_0,\ldots,V_n\in \mathcal V$ such that $f(V_i)=U_i$ and $V_i\in\mathcal V'$ for every $0\le i\leq n$. Then $y$ contains a preimage in 
$V_n\subset\cup\mathcal V'$. This proves the claim.\\
All this was made for a given $\varepsilon>0$. Now consider a decreasing sequence $\varepsilon_n\to 0$, and for each $n$, let $\mathcal U_n$ be a cover of $K$ made by 
$\varepsilon_n$-admissible neighborhoods of points of $K$ such that $\mathcal U_n$ is a refinement of $\mathcal U_{n-1}$. Let $\mathcal V_n$ be 
the cover of $f^{-1}(K)$ made by the parts of $f^{-1}(U)$ with $U\in\mathcal U_n$. The claim above implies that the number of   
$f^{-1}(K)$-$\mathcal V_n$-conglomerates is at most the degree of $f$. Let $K'$ be a component of $f^{-1}(K)$; note that for each $n$ there is a conglomerate $\mathcal V'_n$ of $\mathcal V_n$ such that $K'\subset \cup\mathcal V'_n$. This, together with Lemma \ref{conexos}, implies that $f(K')=K$.
The fact that $f|_{K'}$ is a branched covering onto $K$ is now immediate.\\

%
%
%

\section{Riemann-Hurwitz}\label{7}
The Riemann-Hurwitz formula establishes a relation between the degree and the number of critical points of branched coverings $f:X\to Y$. 
This relation involves the Euler characteristic of the spaces $X$ and $Y$. A priori, it has no sense when the spaces are not locally connected.\\
  
We introduce some notations:\\
\begin{itemize}
\item
$BC(X,Y)$ (resp. $BC(X)$) is the space of branched coverings from $X$ to $Y$ (resp. from $X$ to $X$).
\item
$BC^*(X,Y)$ (resp. $BC^*(X)$) is the space of branched coverings from $X$ to $Y$ (resp. from $X$ to $X$) having degree $d>1$.
\item
$\deg(f)$ is the degree of $f$, and $r(f) = \sum_p (k_p - 1)$ is the sum of the multiplicities of the critical points of $f$, where $k_p$ denotes the local degree of $f$ at the critical point $p$.
\item 
The Euler characteristic of an open connected subset of the sphere $S^2$ is defined as $\chi(X)=2-n(X)$, where $n(X)$ is the number of connected components of $S^2\setminus X$.\\
\end{itemize}

The classical Riemann-Hurwitz formula holds for branched coverings between finite cell complexes and also between open connected subsets of the sphere.
For every $f\in BC(X,Y)$:\\

\begin{equation}
\label{e1}
\chi(X)= \deg (f)\chi(Y)-r(f)\\
\end{equation}

\begin{rk}
A well known consequence of this formula is the following: if $f\in BC^*(S^2)$ and $X$ is a completely invariant continuum ($f^{-1}(X)=X$), then $S^2\setminus X$ has $0$, $1$, $2$,
or infinitely many components. In particular, no Wada lake with a finite number $L>2$ of lakes can be a completely invariant subcontinuum of some $f\in BC^*(S^2)$. This rules out,
for instance, the Plykin attractor, which has exactly four complementary domains but arises as the attractor of a homeomorphism rather than of a branched covering of degree greater 
than one. By contrast, a Wada lake with infinitely many lakes that \emph{is} a completely invariant subcontinuum of some $f\in BC^*(S^2)$ was constructed in \cite{plig}.\\
\end{rk}

Going further, formula (\ref{e1}) can itself be used to \emph{define} the Euler characteristic of a continuum $X$, even when $X$ is not a cell complex and the classical definition
does not apply.\\

\begin{defi}
Let $X$ be a continuum with $BC^*(X)\neq \emptyset$. If the function
$$
f\in BC^*(X)\longmapsto \frac{r(f)}{\deg(f)-1}
$$
is constant, we say that $X$ has a \emph{well-defined Euler characteristic}, equal to this common value:
$$
\chi(X)=\frac{r(f)}{\deg(f)-1}, \qquad f\in BC^*(X).
$$\\
\end{defi}

For instance, this gives $\chi(X)=0$ for the solenoid; and, as we show next, $\chi(K)=1/2$ for the Knaster continuum $K$.  It is worth stressing that this notion is genuinely new, 
not merely an extension of an existing one to a wider domain. The standard shape-theoretic approach to defining topological invariants for arbitrary compacta, via Čech cohomology, 
does apply to every continuum -- but it is blind to the phenomenon we are after here. Chainable continua, such as the Knaster continuum, have trivial Čech cohomology in positive 
degrees; the corresponding shape-theoretic Euler characteristic would therefore assign $\chi(K)=1$ to the Knaster continuum, exactly as
it would to a single point, revealing nothing of the finer structure captured by our approach. Our definition, by contrast, extracts a non-trivial rational invariant, $\chi(K)=1/2$, directly 
from the combinatorics of its branched self-coverings -- information that is simply invisible to the classical shape-theoretic 
invariant.\\

\subsection*{The Knaster continuum.}

The Knaster continuum is an indecomposable planar continuum. It can be constructed (as it was by Knaster a hundred years ago) as an intersection of discs, but it is also 
the inverse limit of the second Chebyshev polynomial $T_2$, defined on the interval $I=[-1,1]$ by $T_2(x)=2x^2-1$.
In general, $T_n$ will denote the $n_{th}$ Chebyshev polynomial. That is, $T_n$ is the induced map on $I$ by the function $m_n:S^1\to S^1$, $m_n(z)=z^n$ and the projection
$\pi:S^1\to I$, $\pi (z)=Re(z)$ ($T_n\pi=\pi m_n$).\\

We consider $K$ the inverse limit of the map $T_2$ and define $\hat T_d$ on $K$ as
$$\hat T_d(x_0,x_1,\ldots)=(T_d(x_0),T_d(x_1),\ldots).$$
As $T_d$ and $T_2$ commute it follows that
$\hat T_d$ is a map from $K$ to $K$.  
Moreover, as $\pi m_2 = T_2\pi$, $\pi$ induces a map $\hat \pi: S_2\to K$, where $S_2$ is the diadic solenoid which we are indetifying here
with the inverse limit of the map $m_2$.  Note that $\hat \pi$ is a $2:1$ branched covering from the diadic solenoid to the Knaster continuum with
exactly one branching point, namely $(1,1,\ldots, 1, \ldots)$.\\

\begin{prop}
\label{euler}
 $BC^*(K)$ is not empty. Moreover, if $f\in BC^*(K)$, then $2r(f)+1=\deg(f)$.\\
\end{prop}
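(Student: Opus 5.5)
The proof has two parts: existence, via the maps $\hat T_d$ and the solenoid, and the relation $2r(f)+1=\deg(f)$, via a local analysis of $K$ near its endpoint.

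\emph{Existence.} For odd $d$ I will show that $\hat T_d\in BC^*(K)$, using the semiconjugacy $\hat T_d\circ\hat\pi=\hat\pi\circ\hat m_d$ together with Theorem \ref{solenoide}. Since $\hat m_d$ is a $d$-fold covering of $S_2$ and $\hat\pi$ is a $2:1$ branched covering, $\hat T_d$ is a branched covering of degree $d$ on $K$. Its only branching can occur over the unique branch point $e=\hat\pi(1,1,\ldots)$ of $\hat\pi$. Indeed, a point of $S_2$ with all coordinates real must have $x_n=\pm1$ and $x_n^2=x_{n-1}$, and this forces $(1,1,\ldots)$.

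Now consider the fibre of $e$. The point $e$ has $d$ preimages under $\hat m_d$ in $S_2$. One of them is $(1,1,\ldots)$ itself, since it is fixed by $\hat m_d$. The remaining $d-1$ preimages are not real sequences, so they are identified in conjugate pairs by $\hat\pi$. This gives $(d-1)/2$ points of $K$ at which $\hat T_d$ has local degree $2$, so $r(\hat T_d)=(d-1)/2$. This already shows $BC^*(K)\neq\emptyset$ and agrees with the formula.

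\emph{The relation for arbitrary $f\in BC^*(K)$.} Here I will use the local topology of $K$ and not the solenoid. Every point $x\neq e$ has arbitrarily small neighborhoods homeomorphic to (Cantor set)$\times$(open arc). At $e$, small neighborhoods look like a Cantor set of arcs all ending at $e$ on one side.

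I would make this precise with chains. Take a small punctured neighborhood $V\setminus\{x\}$ and let $\mathcal S(x)$ be the set of chain-components of $V\setminus\{x\}$ that accumulate at $x$, in the limit as $V$ shrinks. The goal is to show $\#\mathcal S(x)=2$ for $x\neq e$ and $\#\mathcal S(e)=1$.

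By Definition \ref{bc}, $f$ restricted to a punctured part $U_i\setminus\{x_i\}\to U\setminus\{y\}$ is a degree-$r_i$ metric covering. By Lemma \ref{kalfa} this covering maps chain components onto chain components. Each germ component is an arc$\times$Cantor set, and such a component is chain-simply-connected by Proposition \ref{prop00}. Hence, by Theorem \ref{t1}, each component upstairs maps homeomorphically onto one component downstairs. The local degree $r_i$ therefore equals $\#\mathcal S(x_i)/\#\mathcal S(y)$. This yields three facts:
\begin{enumerate}
\item non-endpoints over non-endpoints have local degree $1$;
\item $e$ cannot map to a non-endpoint, because $1/2$ is not an integer;
\item a non-endpoint over $e$ has local degree exactly $2$.
\end{enumerate}

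Since $e$ must map somewhere, $f(e)=e$ with local degree $1$, and every critical point lies in $f^{-1}(e)\setminus\{e\}$ with multiplicity $1$. Applying $\sum_{x\in f^{-1}(e)}\deg(f,x)=\deg(f)$ then gives $\deg(f)=1+2r(f)$.

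\emph{Main obstacle.} The hard step is the rigorous identification of the number of ``sides'' $\#\mathcal S(x)$ as a local topological invariant. This requires checking that the germs of chain components are well defined, independent of the choice of neighborhood $V$, and that a metric covering of punctured neighborhoods really induces a bijection-with-multiplicity on these germs. I would handle this using the explicit chainable description of $K$ as an inverse limit of $T_2$. The tool is the standard neighborhoods $\{x\in K: x_n\in J\}$ for a small interval $J$, whose components are arcs indexed by a Cantor set.
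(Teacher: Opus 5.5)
Your existence argument matches the paper's Claim 1. Your plan for the relation is also the idea behind the paper's terse Claim 2: count local ``sides'' (two at every $x\neq e$, one at $e$) and read off local degrees as ratios.

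\textbf{The gap is in how you formalize the sides: chain-components cannot detect a puncture.} Since $x$ is not isolated, two points of $V\setminus\{x\}$ are joined by $\epsilon$-chains in $V\setminus\{x\}$ for every $\epsilon$ if and only if they are so joined in $V$, because you can perturb a chain off $x$. Concretely, in a product neighborhood $C\times(-1,1)$ of $x=(c,0)$, the halves $\{c\}\times(-1,0)$ and $\{c\}\times(0,1)$ are chain-connected by stepping over $x$.

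\emph{Consequences.} With your definition, $\#\mathcal S(x)=1$ for every $x$, including $e$. Your ratio formula would then force $r_i=1$ everywhere, i.e.\ no critical points at all, contradicting what you just proved for $\hat T_d$. Equivalently, at a critical point $x_i$ over $e$, the chain-component of $U_i\setminus\{x_i\}$ at $x_i$ maps $2{:}1$, not homeomorphically, onto the one at $e$.

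\emph{Hypotheses not met.} The results you cite are used outside their hypotheses. When $r_i\ge 2$, the punctured covering $U_i\setminus\{x_i\}\to U\setminus\{y\}$ is not discrete: fibres collapse to $x_i$, so $i(f)=0$. The germ components are non-compact, and an ``arc$\times$Cantor set'' is not a continuum. So Lemma~\ref{kalfa}, Theorem~\ref{t1} and Proposition~\ref{prop00} do not apply.

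\emph{Local picture at $e$.} Your description is wrong. Only one arc ends at $e$, namely the initial segment of its composant. The nearby arcs avoid $e$ and fold back along that segment: via $\hat\pi$, a neighborhood of $e$ is $C\times(-1,1)$ modulo $(c,t)\mapsto(\bar c,-t)$. If a Cantor set of arcs ended at $e$, it would have uncountably many sides, not one.

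\textbf{The repair is to use ordinary connected components.} Let $s(y)$ be the number of components of $U\setminus\{y\}$ whose closure contains $y$.
\begin{enumerate}
\item Every proper subcontinuum of $K$ is an arc, and $e$ is the only point that is an endpoint of its composant. Hence, for every open $U\ni y$ of small diameter, the component of $U$ containing $y$ is an interval of the composant of $y$.
\item No other component of $U$ has $y$ in its closure, since adjoining $y$ would give a larger connected subset of $U$. Therefore $s(y)=2$ for $y\neq e$ and $s(e)=1$, independently of $U$.
\item Each component of $U\setminus\{y\}$ is an interval. Ordinary covering theory (no chains needed) shows that the $r_i$-sheeted covering over it is trivial. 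So the components of $U_i\setminus\{x_i\}$ are exactly the lifts of the components of $U\setminus\{y\}$, each of which has exactly $r_i$ lifts.
\item A lift accumulates at $x_i$ if and only if its image accumulates at $y$. One direction is continuity. For the other, pass to smaller admissible neighborhoods, whose parts have small diameter. Since $x_j\notin\overline{U_i}$ for $j\neq i$, once the diameter is small enough the part through $x_j$ misses $U_i$. Hence points of $U_i$ over the smaller neighborhood lie close to $x_i$.
\end{enumerate}
This gives $s(x_i)=r_i\,s(y)$. Your three consequences and the count $\deg(f)=1+2r(f)$ then follow. So repaired, your argument is a fully explicit version of the paper's remark that composants go to composants and critical points are critical points of interval maps.
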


\begin{proof} 
Claim 1. For every $d$, $\hat T_d:K\to K$ is a branched covering with one critical value $\hat \pi (1,1,\ldots, 1, \ldots)$.  
If $d$ is an odd integer, then $\hat{T}_d$ is $d:1$ with $(d-1)/2$ critical points of multiplicity $1$.  If $d=2^k(2l+1)$, then $\hat{T}_d$
is $2l+1: 1$ with $2l$ critical points with multiplicity $1$. \\

To prove the claim, note that $\hat{T_{2^k}}$ is a homeomorphism as it is the natural action on $K$ induced by the bonding map $T_2$.  
So, we only have to prove the assertion for $d$ odd.
By Theorem \ref{solenoide}, if $d$ is odd, then $\hat{m}_d:S_2\to S_2$ is a $d:1$ self-covering.  As $\hat \pi$ is $2:1$ except at $(1,1,\ldots, 1, \ldots)\in S_2$, it follows that
$\hat T_d$ is $d:1$ except at $\hat T_d^{-1}((1,1,\ldots, 1, \ldots))$.  Note that $\hat T_d^{-1}((1,1,\ldots, 1, \ldots))$ consists exaclty of $(d-1)/2$ points that are the $\hat \pi$
projection of the
$d-1$ $d_{th}$ roots of unity different from $1$.  Moreover, at each of these $(d-1)/2$ points $\hat T_d$ is $2:1$, and hence the multiplicity of each of them is $1$.\\

Claim 2. Let $f:K\to K$ be a degree $d$ branched covering, $d\geq 1$.  Then, $f$ has exactly one critical value $p$: the endpoint of $K$. Moreover, $p$ is fixed and regular, and
any other preimage of $p$ is a critical point of multiplicity $1$.  In particular, $d$ is odd and  there are exaclty $\frac{d-1}{2}$ critical points. The second assertion of 
the proposition follows.\\

To prove this claim, note that any point other than $p$, $K$ is locally homeomorphic to $C\times I$, where $C$ is the Cantor set and $I=(0,1)$. 
This implies that $p$ must be fixed and regular.  Any other preimage of $p$ is a point with local structure $C\times I$ which is mapped to the 
endpoint of $K$.  Then, necesarilly the point is critical and its multiplicity is $1$ (as composant are sent in composants, 
the critical points are critical points of interval maps).
 
\end{proof}

\begin{rk}
\label{c4}
The Euler characteristic of the Knaster continuum is $\chi(K)=1/2$.\\

\end{rk}

\begin{prop}  Let $f:S^2\to S^2$ be a branched covering of degree $d\geq 1$. Let $X\subset S^2$ be a non-separating continuum such that $f^{-1}(X)=X$. 
Then $X$ contains $d-1$ critical points counted with multiplicity.
\end{prop}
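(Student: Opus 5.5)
Since $X$ is a non-separating continuum and $f^{-1}(X)=X$, the complement $\Omega=S^2\setminus X$ is a connected open set with $f^{-1}(\Omega)=\Omega$. The plan is to apply the Riemann--Hurwitz formula (\ref{e1}) to the restriction $f|_\Omega:\Omega\to\Omega$ and then count critical points in $S^2$.

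First I check that $f|_\Omega:\Omega\to\Omega$ is a branched covering of degree $d$. Complete invariance gives $f^{-1}(\Omega)=\Omega$ and $f(\Omega)=\Omega$. For $y\in\Omega$ every preimage lies in $\Omega$, so the local structure of Definition \ref{bc} (or the classical local model $z\mapsto z^k$) restricts from $S^2$ to $\Omega$. Shrinking the admissible neighborhoods into $\Omega$ shows that all fibres are the full fibres of $f$. Hence $\deg(f|_\Omega)=d$, and the critical points of $f|_\Omega$ are exactly the critical points of $f$ lying in $\Omega$, with the same multiplicities. I must also know that $\Omega$ is connected and nonempty; this is part of the hypothesis ``non-separating'' (and $X\neq S^2$, being a continuum with the complement intended to be nonempty). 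If $X$ is a single point or $\Omega$ is empty, I would treat that case separately or exclude it.

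Next I compute Euler characteristics using the definition for open connected subsets of $S^2$. Since $S^2\setminus\Omega=X$ is connected, $n(\Omega)=1$, so $\chi(\Omega)=1$. Formula (\ref{e1}) for $f|_\Omega$ gives $1=d\cdot 1-r(f|_\Omega)$, that is, $\Omega$ contains exactly $d-1$ critical points counted with multiplicity. Applying (\ref{e1}) to $f:S^2\to S^2$ gives $2=2d-r(f)$, so $r(f)=2d-2$. Subtracting, the critical points in $X=S^2\setminus\Omega$ have total multiplicity $(2d-2)-(d-1)=d-1$, as claimed.

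The main obstacle is justifying the Riemann--Hurwitz formula for $f|_\Omega$ on an open set that need not be of finite type. Here it is fine, because the paper states that (\ref{e1}) holds for branched coverings between open connected subsets of the sphere, and $\Omega$ is simply connected with finitely many critical points. If a direct argument were needed, I would exhaust $\Omega$ by a closed disc $D$ containing all critical points in $\Omega$, chosen so that $f^{-1}(D)$ is a disc. This is possible by taking $D$ large and using simple connectivity of $\Omega$ together with Riemann--Hurwitz for finite complexes. On the disc $f^{-1}(D)\to D$ the count is again $d-1$. The only remaining care is to ensure that $f^{-1}(D)$ is connected, which follows since $f^{-1}(\Omega)=\Omega$ is connected and $D$ can be taken to exhaust $\Omega$.
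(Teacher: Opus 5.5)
Your proof is correct and is essentially the paper's argument: $\Omega=S^2\setminus X$ is completely invariant with $\chi(\Omega)=1$, so (\ref{e1}) puts $d-1$ critical points in $\Omega$, and subtracting this from the $2d-2$ critical points of $f$ on $S^2$ leaves $d-1$ in $X$. Your optional direct justification is not needed, since the paper also takes (\ref{e1}) for open connected subsets of $S^2$ as given. If you keep it, choose $D$ so that its interior contains the critical \emph{values} lying in $\Omega$ (not the critical points); then $f$ is unbranched over $\Omega\setminus D$, $\Omega$ deformation retracts onto $f^{-1}(D)$, and that is what makes $f^{-1}(D)$ a disc.
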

 
\begin{proof} 
It is obviuos that if $X$ is completely invariant then so is $Y=S^2\setminus X$. As $X$ does no separate the sphere, $Y$ is connected, and as $X$ is connected, $\chi(Y)=1$. On the 
other hand, as $\chi(S^2)=2$ every $f$ as in the hypothesis of the proposition has $2d-2$ critical points in the sphere 
and $d-1$ in $Y$. It follows that $X$ contains the remaining $d-1$ critical points of $f$ and the assertion follows.
\end{proof}
%
%
%

\begin{rk}  The previous result shows that the Knaster continuum cannot be completely invariant for a branched covering of the sphere of degree $d>1$.
\end{rk}

In a sequel of this article we  define a class of spaces  that are not necessarily locally connected but have a well defined 
Euler characteristic if they admit self-branched coverings.  Indeed we  prove:\\

\noindent
{\em Let $X$ be a continuum containing no indecomposable subcontinua and such that 
the intersection of two connected subspaces of $X$ is connected.  If $BC^*(X)\neq \emptyset$, then $\chi(X)=1$ .}\\

This result is then applied to study dynamics of branched coverings of $X$.\\

We finish with a simple lemma that, in some particular cases,  extends the Riemann-Hurwitz formula to general spaces having a well defined Euler characteristic.

\begin{lemma} Let $X$ and $Y$ be continua with well defined Euler characteristic and let $f:X\to Y$ be a branched covering. Suppose, furthermore that there exists branched covering
$g:X\to X$, $h:Y\to Y$ such that:

\[
\begin{tikzcd}
X \arrow[r,"g"] \arrow[d,"f"'] &
X \arrow[d,"f"] \\
Y \arrow[r,"h"'] &
Y\\
\end{tikzcd}
\]

Then, the Riemann-Hurwitz formula holds for $f$:\\

$$\chi(X)=\deg(f)\chi(Y) - r(f) .$$\\
 
\end{lemma}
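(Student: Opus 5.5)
We prove the formula by computing the number of critical points of the composite $f\circ g=h\circ f$ in two ways, using multiplicativity of local degrees under composition. First I would check that compositions of branched coverings between continua are branched coverings, with $\deg(\phi\circ\psi)=\deg(\phi)\deg(\psi)$ and local degrees multiplying: $\deg(\phi\circ\psi,x)=\deg(\psi,x)\,\deg(\phi,\psi(x))$. This follows from Definition~\ref{bc} by taking an admissible neighborhood $U$ of $y$ for $\phi$, then admissible neighborhoods for $\psi$ of the finitely many points of $\phi^{-1}(y)$ contained in the corresponding parts. On the punctured parts the maps are metric coverings, so their degrees multiply.

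The key counting identity is the standard one. For branched coverings $\psi:X\to Y$ and $\phi:Y\to Z$ of finite degree with finitely many critical points,
$$r(\phi\circ\psi)=r(\psi)+\deg(\psi)\,r(\phi).$$
To prove it, write $\deg(\phi\circ\psi,x)-1=(\deg(\psi,x)-1)+\deg(\psi,x)(\deg(\phi,\psi(x))-1)$ and sum over $x\in X$. The first terms give $r(\psi)$. In the second, group by $y=\psi(x)$ and use $\sum_{x\in\psi^{-1}(y)}\deg(\psi,x)=\deg(\psi)$, noted after Definition~\ref{bc}; this yields $\deg(\psi)r(\phi)$. The sums are finite because critical sets are discrete in compact spaces.

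Now apply the identity to $f\circ g=h\circ f$:
$$r(g)+\deg(g)\,r(f)=r(f)+\deg(f)\,r(h).$$
Since $X$ and $Y$ have well-defined Euler characteristic, $r(g)=\chi(X)(\deg g-1)$ and $r(h)=\chi(Y)(\deg h-1)$. This requires $\deg g,\deg h>1$, which is implicit in asking that $g,h\in BC^*$, and I would state this assumption explicitly. Comparing degrees of the two composites gives $\deg g=\deg h=:n>1$. Substituting,
$$\chi(X)(n-1)+n\,r(f)=r(f)+\deg(f)\chi(Y)(n-1),$$
and dividing by $n-1\neq 0$ gives $\chi(X)=\deg(f)\chi(Y)-r(f)$.

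The main obstacle is the composition step. In the non-locally-connected setting we must show that the composite really satisfies Definition~\ref{bc}, and in particular that its parts have diameters less than $\epsilon$ and restrict to metric coverings of the product degree on punctured parts. I would handle this by choosing the $\psi$-admissible sets first with the $\epsilon$ constraint. I would then shrink the $\phi$-admissible $U$ until each part of $\phi^{-1}(U)$ lies inside the union of the chosen $\psi$-admissible neighborhoods, using compactness. Finally I would take as parts of the composite the parts of $\psi^{-1}$ of those shrunken sets.
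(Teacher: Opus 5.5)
Your proposal is correct and follows the paper's proof. Both apply $r(\phi\circ\psi)=r(\psi)+\deg(\psi)\,r(\phi)$ to $f\circ g=h\circ f$, use $\deg g=\deg h$, substitute the definition of $\chi$, and divide by $\deg g-1$; you also justify the composition identity and make explicit the hypothesis $\deg g=\deg h>1$, which the paper uses only implicitly when it divides by $\deg(g)-1$.
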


\begin{proof}  First note that the following equations hold:\\

$$r(fg)=\deg(g)r(f)+r(g),$$
$$r(hf)= \deg(f)r(h)+r(f) .$$\\

Now, as $fg=hf$, it follows that:\\

$$0=r(f)(\deg(g)-1)+r(g)-\deg(f)r(h).$$\\

Now, we divide by $\deg(g)-1$, note that $\deg(g)=\deg(h)$ and use the definition of Euler characteristic to get:

$$0=r(f)+\chi(X)-\deg(f)\chi(Y),$$\\

exactly as we wanted.

\end{proof}

As an application, note that the branched covering $f:S_2\to K$ described above, from the diadic solenoid to the Knaster continuum is $2:1$ with exactly one critical point of 
multiplicity $1$.  We see that:

$$0=\chi(S_2)=\deg(f)\chi(K)-r(f)= 2(1/2)-1.$$\\

\noindent
{\bf Acknowledgements.}  We are indebted to Professor Jernej \v{C}in\v{c}, whose insightful guidance on indecomposable continua greatly benefited this work.\\

\end{document}